\newtheorem{theorem}{Theorem}
\newtheorem{lemma}[theorem]{Lemma}
\newtheorem{proposition}[theorem]{Proposition}
\newtheorem{remark}[theorem]{Remark}
\newtheorem{definition}{Definition}
\numberwithin{equation}{section}
\newcommand{\R}{\mathbb R}
\newcommand{\N}{\mathbb N}
\newcommand{\SD}{\Sigma_{\mathcal D}}
\newcommand{\SN}{\Sigma_{\mathcal N}}
\begin{document}
	
\title[Nonlocal critical problems near resonance]{Nonlocal critical problems with mixed boundary conditions and nearly resonant perturbations}

\author{Eduardo Colorado}
\address[E. Colorado]{Departamento de Matem\'{a}ticas, Universidad Carlos III de Madrid\\
Av. Universidad 30, 28911 Legan\'{e}s (Madrid), Spain}
\email{\tt eduardo.colorado@uc3m.es}

\author{Giovanni Molica Bisci}
\address[G. Molica Bisci]{Department of Human Sciences and Promotion of Quality of Life, San Raffaele University, via di Val Cannuta 247, I-00166 Roma, Italy}
\email{\tt giovanni.molicabisci@uniroma5.it}

\author{Alejandro Ortega$^\dagger$}
\address[A. Ortega]{Dpto. de Matem\'aticas Fundamentales, Facultad de Ciencias, UNED, 28040 Madrid, Spain}
\email{\tt alejandro.ortega@mat.uned.es}

\author{Luca Vilasi}
\address[L. Vilasi]{Department of Mathematical and Computer Sciences, Physical Sciences and Earth Sciences\\
University of Messina\\
Viale F. Stagno d’Alcontres, 31 - 98166 Messina, Italy}
\email{\tt lvilasi@unime.it}

\keywords{Fractional Laplacian, Variational Methods, $\nabla$-Theorems, Mixed Boundary Data, Superlinear and Critical Nonlinearities.\\
\phantom{aa} 2010 AMS Subject Classification: Primary: 35R11, 35A15, 35S15, 49J35; Secondary: 35J61, 35B33, 58E05
\phantom{aa} $^\dagger$Corresponding author: A. Ortega.
}

\maketitle

\begin{center}
{\it   \small To Alonso, at the start of the journey}
\end{center}	

\begin{abstract}
We consider the following nonlocal critical problem with mixed Dirichlet-Neumann boundary conditions,
\begin{equation*}
        \left\{
        \begin{tabular}{lcl}
        $(-\Delta)^su=\lambda u+|u|^{2_s^*-2}u$ & &in $\Omega$, \\[2pt]
        $\mkern+51mu u=0$& &on $\Sigma_{\mathcal{D}}$, \\[2pt]
        $\mkern+36mu \displaystyle \frac{\partial u}{\partial \nu}=0$& &on $\Sigma_{\mathcal{N}}$,
        \end{tabular}
        \right.
\end{equation*}
where $(-\Delta)^s$, $s\in (1/2,1)$, is the spectral fractional Laplacian operator, $\Omega\subset\mathbb{R}^N$, $N>2s$, is a smooth bounded domain, $2_s^*=\frac{2N}{N-2s}$ denotes the critical fractional Sobolev exponent, $\lambda>0$ is a real parameter, $\nu$ is the outwards normal to $\partial\Omega$,  $\Sigma_{\mathcal{D}}$, $\Sigma_{\mathcal{N}}$ are smooth $(N-1)$--dimensional submanifolds of $\partial\Omega$ such that
$\Sigma_{\mathcal{D}}\cup\Sigma_{\mathcal{N}}=\partial\Omega$,
$\Sigma_{\mathcal{D}}\cap\Sigma_{\mathcal{N}}=\emptyset$ and
$\Sigma_{\mathcal{D}}\cap\overline{\Sigma}_{\mathcal{N}}=\Gamma$ is a smooth $(N-2)$--dimensional submanifold of $\partial\Omega$.
By employing a $\nabla$-theorem we prove the existence of multiple solutions when the parameter $\lambda$ is in a left neighborhood of a given  eigenvalue of $(-\Delta)^s$.
\end{abstract}

\section{Introduction}
In this paper we analyze the multiplicity of solutions to the following nonlocal problem
\begin{equation}\label{problem}\tag{$P_\lambda$}
\left\{
        \begin{tabular}{lcl}
        $(-\Delta)^su=\lambda u + |u|^{2_s^*-2}u$ & &in $\Omega$, \\[3pt]
        $\mkern+21muB(u)=0$& &on $\partial\Omega$,
        \end{tabular}
        \right.
\end{equation}
where $\Omega\subset\R^N$ is a bounded domain with smooth boundary, $N>2s$, $s\in(1/2,1)$, $2_s^*:=\frac{2N}{N-2s}$ denotes the critical fractional Sobolev exponent and $\lambda$ is a positive parameter. The fractionality range $\frac{1}{2} < s < 1$ is the correct one for mixed boundary problems due to the natural embedding of the associated functional space, see Remark \ref{rem:range_s}. Here, $(-\Delta)^s$ denotes the spectral fractional Laplace operator on $\Omega$ endowed with the mixed Dirichlet-Neumann ($\mathcal{D}$-$\mathcal{N}$ for short) boundary conditions
$$
B(u):=u\chi_{\SD} +\frac{\partial u}{\partial\nu}\chi_{\SN},
$$
where $\nu$ is the outward unit normal to $\partial\Omega$, $\chi_A$ denotes the characteristic function of the set $A\subset\partial\Omega$ and $\Omega$ satisfies the following set of assumptions:
\begin{itemize}
\item[$(\Omega_1)$] $\Omega\subset\R^N$ is a bounded Lipschitz domain;
\item[$(\Omega_2)$] $\SD$ and $\SN$ are smooth $(N-1)$--dimensional submanifolds of $\partial\Omega$;
\item[$(\Omega_3)$] $\SD$ is a closed manifold with positive $(N-1)$--dimensional Lebesgue measure, namely $|\SD|=\alpha\in(0,|\partial\Omega|)$;
\item[$(\Omega_4)$] $\SD\cap\SN=\emptyset$, $\SD\cup\SN=\partial\Omega$ and $\SD\cap\overline{\Sigma}_\mathcal{N}=\Gamma$, where $\Gamma$ is a smooth $(N-2)$--dimensional submanifold of $\partial\Omega$.
\end{itemize}

Denote by $|\Omega|$ the $N$-dimensional Lebesgue measure of $\Omega$ and by $\widetilde{S}(\Sigma_{\mathcal{D}})$ the fractional Sobolev constant for the mixed $\mathcal{D}$-$\mathcal{N}$ boundary data setting (see \eqref{Sob_mix}). Our main result reads as follows:
\begin{theorem}\label{mainresult}
Assume $(\Omega_1)-(\Omega_4)$ and let $\Lambda_k$, $k\geq 2$, be an eigenvalue of multiplicity $m\in\N$ of the problem
\begin{equation}\label{eigenproblem}
	\left\{
	\begin{array}{rl}
		(-\Delta)^{s} u = \lambda u  & \text{in } \Omega,\\
		B(u)=0\mkern+12mu & \text{on } \partial\Omega.
	\end{array}
	\right.
\end{equation}
Then, setting
\begin{equation}\label{defdeltak}
	\delta_k:=\min\left\lbrace \frac{\Lambda_k-\Lambda_{k-1}}{\Lambda_{k-1}+|\Omega|^{-\frac{2s}{N}}\widetilde{S}(\Sigma_D)}, \frac{\Lambda_{k+m}-\Lambda_k}{\Lambda_{k+m}}  \right\rbrace |\Omega|^{-\frac{2s}{N}}\widetilde{S}(\Sigma_{\mathcal{D}}), 
\end{equation}
for every $\lambda\in \left(\Lambda_k-{\delta_k},\Lambda_k\right)$, \eqref{problem} has at least two solutions. 
\end{theorem}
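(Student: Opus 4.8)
The plan is to obtain the solutions of \eqref{problem} as nontrivial critical points of the energy functional $J_\lambda\in C^1(\mathbb{X}^s)$ associated with \eqref{problem} on the natural energy space $\mathbb{X}^s$ of the mixed $\mathcal D$--$\mathcal N$ problem, namely
\begin{equation*}
J_\lambda(u)=\frac12\|u\|^2-\frac{\lambda}{2}\int_\Omega u^2\,dx-\frac{1}{2_s^*}\int_\Omega|u|^{2_s^*}\,dx,
\end{equation*}
and to apply to it a $\nabla$--theorem. A concentration--compactness argument shows that $J_\lambda$ satisfies the Palais--Smale condition at every level $c$ with $0<c<c_*$, where $c_*:=\frac{s}{N}\widetilde{S}(\SD)^{N/2s}$.

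Next I would fix an $L^2(\Omega)$--orthonormal basis $\{\varphi_j\}_{j\ge 1}$ of eigenfunctions of \eqref{eigenproblem}, with eigenvalues $0<\Lambda_1\le\Lambda_2\le\cdots$, and split $\mathbb{X}^s=\mathbb{H}^-\oplus\mathbb{H}^0\oplus\mathbb{H}^+$, where $\mathbb{H}^-$, $\mathbb{H}^0$, $\mathbb{H}^+$ are spanned by the eigenfunctions with $\Lambda_j\le\Lambda_{k-1}$, $\Lambda_j=\Lambda_k$, $\Lambda_j\ge\Lambda_{k+m}$ respectively; this decomposition is orthogonal in $\mathbb{X}^s$ and in $L^2(\Omega)$, $\dim(\mathbb{H}^-\oplus\mathbb{H}^0)<\infty$ and $\dim\mathbb{H}^0=m$. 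Since the first term in \eqref{defdeltak} is strictly smaller than $\Lambda_k-\Lambda_{k-1}$, any $\lambda\in(\Lambda_k-\delta_k,\Lambda_k)$ satisfies $\Lambda_{k-1}<\lambda<\Lambda_k<\Lambda_{k+m}$; hence the quadratic form $Q_\lambda(u):=\|u\|^2-\lambda\int_\Omega u^2\,dx$ is negative definite on $\mathbb{H}^-$ (with $Q_\lambda(w)\le-\tfrac{\lambda-\Lambda_{k-1}}{\Lambda_{k-1}}\|w\|^2$), equals $\tfrac{\Lambda_k-\lambda}{\Lambda_k}\|u\|^2$ on $\mathbb{H}^0$, and satisfies $Q_\lambda(u)\ge(1-\lambda/\Lambda_{k+m})\|u\|^2$ on $\mathbb{H}^+$.

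The core of the argument is the verification of the quantitative hypotheses of the $\nabla$--theorem for this splitting. On $\mathbb{H}^-\oplus\mathbb{H}^0$ one has $\|u\|^2\le\Lambda_k\int_\Omega u^2$, so by H\"older's inequality and by maximizing the resulting one--dimensional function,
\begin{equation*}
\sup_{\mathbb{H}^-\oplus\mathbb{H}^0}J_\lambda\ \le\ \frac{s}{N}\,|\Omega|\,(\Lambda_k-\lambda)^{N/2s}\ <\ c_*,
\end{equation*}
the last inequality because $\Lambda_k-\lambda<\delta_k<|\Omega|^{-2s/N}\widetilde{S}(\SD)$; this keeps every minimax level produced by the $\nabla$--theorem inside the interval $(0,c_*)$ where $(PS)$ holds. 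On the other hand, using the Sobolev inequality $\|u\|^2\ge\widetilde{S}(\SD)|u|_{2_s^*}^2$, on the sphere $\{u\in\mathbb{H}^0\oplus\mathbb{H}^+:\|u\|=\rho\}$ with $\rho=\rho(\lambda)$ small one gets $J_\lambda\ge\alpha>0$, and a linking between this sphere and a finite--dimensional ``cap'' built over $\mathbb{H}^-\oplus\mathbb{H}^0$ yields the two minimax levels $c_1\le c_2$ of the theorem. Tracking the constants in the two linking inequalities — where on $\mathbb{H}^-$ one uses the sharp bound above and on the positive side one uses $Q_\lambda(\varphi_{k+m})=(1-\lambda/\Lambda_{k+m})\|\varphi_{k+m}\|^2$ — is exactly what produces the two entries of the minimum in \eqref{defdeltak}: the first, carrying the gap $\Lambda_k-\Lambda_{k-1}$, controls the ``low side'' inequality (and $c_1<c_*$), the second, carrying $\Lambda_{k+m}-\Lambda_k$, the ``high side'' inequality (and $c_2<c_*$).

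Finally one must verify the $(\nabla)$--condition, i.e.\ $\inf\{\|\nabla J_\lambda(u)\|:u\in Z\}>0$ on a thin cylindrical collar $Z=\{u^0+v:\rho_1\le\|u^0\|\le\rho_2,\ u^0\in\mathbb{H}^0,\ v\in\mathbb{H}^-\oplus\mathbb{H}^+,\ \|v\|\le\varepsilon\}$. Testing $\nabla J_\lambda(u)$ against $u^0$ and using the $\mathbb{X}^s$-- and $L^2$--orthogonality of the splitting gives $\langle\nabla J_\lambda(u),u^0\rangle=\tfrac{\Lambda_k-\lambda}{\Lambda_k}\|u^0\|^2-\int_\Omega|u|^{2_s^*-2}u\,u^0$; since $2_s^*-1>1$, the critical term has higher order and is absorbed by choosing $\rho_2,\varepsilon$ small compared to $\rho_1$. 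Invoking the $\nabla$--theorem then gives two distinct critical points of $J_\lambda$ at levels in $(0,c_*)$ — in particular both nontrivial — which are the asserted two solutions of \eqref{problem}. \textbf{Main obstacle.} The delicate point is the third step: one must calibrate \emph{simultaneously} the radii $\rho,\rho_1,\rho_2,\varepsilon$ and the size of the cap so that \emph{all} the $\nabla$--theorem inequalities hold while \emph{both} minimax levels remain below the compactness threshold $c_*$; it is precisely this balancing, rather than a merely qualitative estimate, that forces the explicit smallness condition $\lambda>\Lambda_k-\delta_k$ with $\delta_k$ as in \eqref{defdeltak}.
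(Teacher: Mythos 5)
Your first three steps (the $(PS)_c$ threshold $c^*=\frac{s}{N}\widetilde S(\SD)^{N/2s}$, the spectral splitting $X_1=\mathbb H_{k-1}$, $X_2=\mathrm{span}\{\varphi_k,\dots,\varphi_{k+m-1}\}$, $X_3=\mathbb P_{k+m-1}$, the linking geometry, and the bound $\sup_{\mathbb H_{k+m-1}}I_\lambda\le\frac{s}{N}|\Omega|(\Lambda_k-\lambda)^{N/2s}$) coincide with the paper's Propositions \ref{propPS}, \ref{prop_supinf} and Lemma \ref{lemaa}. The genuine gap is in your treatment of the $\nabla$-condition, which is where the whole quantitative content of the theorem lives. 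In the Marino--Saccon theorem used here the condition is $(\nabla)(I_\lambda,X_1\oplus X_3,a,b)$: the \emph{projected} gradient $\Pi_{X_1\oplus X_3}\nabla I_\lambda$ must be bounded away from zero on the set of $u$ at distance $\le\gamma$ from $X_1\oplus X_3$ (i.e.\ with \emph{small} $X_2$-component) and with $I_\lambda(u)$ in an energy slab $[a,b]$. Your collar $Z$, where the $\mathbb H^0$-component is bounded \emph{below} by $\rho_1$ and the $\mathbb H^-\oplus\mathbb H^+$-component is small, is essentially the opposite set, you use the full gradient rather than its projection, and you test against $u^0\in\mathbb H^0$ rather than against the components in $X_1\oplus X_3$. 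As stated, your step does not verify the hypothesis of the theorem you intend to invoke.

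Concretely, what is missing is the content of the paper's Lemma \ref{uniquecritpoint} and Lemma \ref{Ujbounded}: one shows that the restriction $I_\lambda|_{\mathbb H_{k-1}\oplus\mathbb P_{k+m-1}}$ has no nontrivial critical point with energy in $(-\varepsilon_\lambda,\varepsilon_\lambda)$, where
\begin{equation*}
\varepsilon_\lambda=\Big(\min\Big\{\tfrac{\lambda-\Lambda_{k-1}}{\Lambda_{k-1}},\tfrac{\Lambda_{k+m}-\lambda}{\Lambda_{k+m}}\Big\}\Big)^{\frac{N}{2s}}c^*,
\end{equation*}
obtained by testing the Euler equation with $u^{\mathbb H}-u^{\mathbb P}$ (this is where \emph{both} spectral gaps enter, through the harmonic mean of $\Lambda_{k-1}$ and $\Lambda_{k+m}$), together with a compactness lemma for sequences with $\Pi_{X_2}u_j\to0$ and $\Pi_{X_1\oplus X_3}\nabla I_\lambda(u_j)\to0$ below level $c^*$; the $\nabla$-condition then follows by contradiction. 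Your attribution of the two entries of the minimum in \eqref{defdeltak} to the ``low side'' and ``high side'' linking inequalities is therefore inaccurate: both arise from requiring $\frac{s}{N}|\Omega|\delta_k^{N/2s}<\varepsilon_\lambda$, i.e.\ from fitting the energy slab of the $\nabla$-condition under the threshold $\varepsilon_\lambda$, not from calibrating radii in the linking. Without these two lemmas the argument cannot be closed, and the ``absorption by choosing $\rho_2,\varepsilon$ small'' sketch does not substitute for them, since the condition must hold on a full energy slab above $\sup I_\lambda(T)$ and for $u$ of arbitrary norm near $X_1\oplus X_3$.
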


Contrary to nonlocal elliptic problems endowed with Dirichlet or Neumann boundary conditions, which have been extensively analyzed over the last two decades (see for instance the monograph \cite{MRS} and references therein), results about existence, multiplicity or qualitative properties of solutions to this type of problems with mixed boundary conditions are much less known. Recent contributions along this direction can be found in \cite{Barrios2020, Carmona2020a, carcolleoort2020, colort2019, Leonori2018, LopezOrtega2021, mov2023, Mukherjee2024, Mukherjee2025, Ortega2023}. In particular, in \cite{mov2023}, the authors proved a multiplicity result for a subcritical problem with nearly resonant perturbations by means of a so-called $\nabla$-theorem due to Marino and Saccon (cf. \cite{marsac1997some}). The main aim of this work is to extend these results to nonlinearities in the critical regime.
 
Although $\nabla$-theorems have been previously applied to obtain multiplicity of solutions (cf. \cite{mu0, mms, marsac1997some, mu2, mu1, mu4}), this is, up to our knowledge, the first work where this tool is used to deal with critical problems.  In this case, in addition to the usual geometric and compactness analysis, a careful analysis is required to obtain explicit estimates of some functional quantities related to \eqref{problem}. To be more precise, the key points of our approach are the following. After performing a spectral decomposition of the underlying energy space, we first determine the precise energy threshold $\varepsilon_\lambda$ below which one obtains only trivial critical points for the energy functional $I_\lambda$ associated with \eqref{problem} restricted to a certain subspace (see Lemma \ref{uniquecritpoint} below). Regarding this step, the harmonic mean of eigenvalues adjacent to the eigenvalue $\Lambda_k$ plays a crucial role (see Remark \ref{ra}), guaranteeing that $\varepsilon_\lambda$ is, in turn, lower than the compactness threshold $c^*$ of Palais-Smale sequences (being $c^*$ determined in Proposition \ref{propPS}). Next, we prove the compactness of sequences whose projections on the eigenspace associated with the eigenvalue $\Lambda_k$ and on its orthogonal satisfy certain conditions (see Lemma \ref{Ujbounded}). This step is crucial to prove the validity of the so-called $\nabla$-condition, a major ingredient of the $\nabla$-theorem (see Definition \ref{def_nabla_condition}). Lastly, we obtain a precise bound for $I_\lambda$ on the eigenspace spanned by the first $k+m-1$ eigenvalues, $m$ being the multiplicity of $\Lambda_k$ (see Lemma \ref{lemaa}). The combination of these results allows us to apply the aforementioned $\nabla$-theorem (see Theorem \ref{nablathm}) to deduce the existence of at least two solutions to \eqref{problem} for $\lambda$ close to every $\Lambda_k$, $k\geq 2$. 
It is worth mentioning that the range of $\lambda$ in Theorem \ref{mainresult} follows the lines of that obtained in \cite{Fiscella2016} by means of an abstract critical point theorem due to Bartolo, Benci and Fortunato (cf. \cite{Bartolo1983}).
\vspace{0.1cm}\\
The paper is organized as follows. In Section \ref{functionalsettings} we introduce the functional framework as well as the Sobolev constant associated with \eqref{problem} which, as it is customary when dealing with critical problems, plays a central role in the compactness properties of minimizing sequences for $I_\lambda$. In Section \ref{nablasect} we introduce the so-called $\nabla$-theorem, Theorem \ref{nablathm}, and prove all the technical results needed to apply it. Finally, in Section \ref{proofofmainresult}, we prove the main result of this paper, the aforementioned Theorem \ref{mainresult}.

\section{Functional framework}\label{functionalsettings}
We begin by first recalling the definition of the fractional Laplace operator, based on the spectral decomposition of the classical Laplace operator under mixed boundary conditions. Let $(\lambda_i,\varphi_i)$ be the eigenvalues and the eigenfunctions (normalized with respect to the $L^2(\Omega)$-norm) of $-\Delta$ with homogeneous mixed Dirichlet-Neumann boundary conditions, respectively. Then $(\lambda_i^s,\varphi_i)$ are the eigenvalues and the eigenfunctions of $(-\Delta)^s$. Consequently, given two smooth functions 
$$
u_i(x)=\sum_{j\geq1}\langle u_i,\varphi_j\rangle_{2}\varphi_j(x), \quad i=1,2,
$$
where $\langle u,v\rangle_2=\displaystyle\int_{\Omega}uv\,dx$ is the standard scalar product on $L^2(\Omega)$, one has
\begin{equation}\label{pre_prod}
\langle(-\Delta)^s u_1, u_2\rangle_{2} = \sum_{j\ge 1} \lambda_j^s\langle u_1,\varphi_j\rangle_{2} \langle u_2,\varphi_j\rangle_{2},
\end{equation}
that is, the action of the fractional operator on a smooth function $u_1$ is given by
\begin{equation*}
(-\Delta)^su_1=\sum_{j\ge 1} \lambda_j^s\langle u_1,\varphi_j\rangle_{2}\varphi_j.
\end{equation*}
As a consequence, the {\it spectral} fractional Laplace operator $(-\Delta)^s$ is well defined in the following Hilbert space of functions that vanish on $\SD$:
\begin{equation*}
H_{\Sigma_{\mathcal{D}}}^s(\Omega)\vcentcolon=\left\{u=\sum_{j\ge 1} a_j\varphi_j\in L^2(\Omega):\ u=0\ \text{on }\Sigma_{\mathcal{D}},\ \|u\|_{H_{\Sigma_{\mathcal{D}}}^s}^2:=
\sum_{j\ge 1} a_j^2\lambda_j^s<+\infty\right\}.
\end{equation*}
Thus, given $u\in H_{\SD}^s(\Omega)$, it follows by definition that
\begin{equation*}
\left\| u\right\|_{H_{\SD}^s}=\left\| (-\Delta)^{s/2} u\right\|_{2}.
\end{equation*}
Moreover, taking in mind \eqref{pre_prod}, the norm $\left\| \cdot\right\|_{H_{\SD}^s}$ is induced by the scalar product
\begin{equation*}
\langle u_1,u_2\rangle_{H_{\Sigma_{\mathcal{D}}}^s}=\left\langle (-\Delta)^su_1,u_2\right\rangle_{2}=\langle (-\Delta)^{\frac{s}{2}}u_1,(-\Delta)^{\frac{s}{2}}u_2\rangle_{2}=\left\langle u_1,(-\Delta)^su_2\right\rangle_{2},
\end{equation*}
for all $u_1,u_2\in H_{\SD}^s(\Omega)$. The above chain of identities can be simply stated as an integration-by-parts like formula.

\begin{remark}\label{rem:range_s}
{\rm As it is proved in \cite[Theorem 11.1]{Lions1972}, if $0<s\le \frac{1}{2}$, then $H_0^s(\Omega)=H^s(\Omega)$ and, thus, also
$H_{\Sigma_{\mathcal{D}}}^s(\Omega)=H^s(\Omega)$ while, for $\frac 12<s<1$, we have $H_0^s(\Omega)\subsetneq H^s(\Omega)$. Indeed, in the latter case the trace operator Tr$: H^s(\Omega)\to L^2(\partial\Omega)$ is a linear and continuous operator, see also\cite[Section 7.2]{BonforteSirVaz2015}. Therefore, the range $\frac 12<s<1$ ensures $H_{\Sigma_{\mathcal{D}}}^s(\Omega)\subsetneq H^s(\Omega)$ and it provides us with the appropriate functional space for the mixed boundary problem \eqref{problem}.
}
\end{remark}

As commented before, this spectral definition of $(-\Delta)^s$ allows us to integrate by parts in the proper spaces, so that a natural definition of weak solution to \eqref{problem} is the following.

\begin{definition}
{\rm	
We say that $u\in H_{\SD}^s(\Omega)$ is a weak solution to \eqref{problem} if, for all $v\in H_{\SD}^s(\Omega)$,
\begin{equation}\label{weak_solution}
\int_\Omega (-\Delta)^{s/2}u (-\Delta)^{s/2} v\, dx =\lambda\int_\Omega uv \; dx + \int_\Omega |u|^{2_s^*-2}uv\, dx,
\end{equation}
that is,
\begin{equation*}
	\langle u,v \rangle_{H_{\Sigma_{\mathcal{D}}}^s}=\langle \lambda u +|u|^{2_s^*-2}u, v\rangle_{2}.
\end{equation*}
}
\end{definition}
Let us note that \eqref{weak_solution} is well-defined because of the embedding $H_{\Sigma_{\mathcal{D}}}^s(\Omega)\hookrightarrow L^{2_s^*}(\Omega)$, so given $u\in H_{\Sigma_{\mathcal{D}}}^s(\Omega)$, we have $\lambda u+|u|^{2_s^*-2}u\in L^{\frac{2N}{N+2s}}\hookrightarrow \left(H_{\Sigma_{\mathcal{D}}}^s(\Omega)\right)'$.\\

\noindent The energy functional $I_\lambda:H_{\Sigma_{\mathcal{D}}}^s(\Omega)\to\R$ associated with \eqref{problem} is given by
\begin{equation}\label{functional_down}
I_\lambda(u):=\frac12\int_{\Omega}|(-\Delta)^{\frac{s}{2}}u|^{2}dx-\frac{\lambda}{2}\int_{\Omega}u^2dx-\frac{1}{2_s^*}\int_{\Omega}|u|^{2_s^*}dx.
\end{equation}
Actually, for all $u,v\in H_{\Sigma_{\mathcal{D}}}^s(\Omega)$, one has
\begin{equation*}
\langle I_\lambda'(u), v\rangle_{H^{-s}}  = \int_{\Omega}(-\Delta)^{\frac{s}{2}}u(-\Delta)^{\frac{s}{2}}v\,dx-\lambda\int_{\Omega}uv\,dx  - \int_{\Omega}|u|^{2_s^*-2}u\,v\,dx,
\end{equation*}
where $\langle\cdot,\cdot\rangle_{H^{-s}}$ denotes the duality product between $H_{\Sigma_{\mathcal{D}}}^s(\Omega)$ and the dual space $H^{-s}(\Omega)\vcentcolon=\left(H_{\Sigma_{\mathcal{D}}}^s(\Omega)\right)'$.

When one considers Dirichlet boundary conditions the 
fractional Sobolev inequality holds: for all $u\in H_{0}^s(\Omega)$ where $N>2s$ and $r\in[1,2^*_s]$, one has
\begin{equation}\label{sobolev}
  C\left(\int_{\Omega}|u|^rdx\right)^{\frac{2}{r}}\leq \int_{\Omega}\left|(-\Delta)^{\frac{s}2}u\right|^2dx.
\end{equation}
When $r=2_s^*$ the best constant in \eqref{sobolev}, 
denoted by $S(N,s)$, is independent of $\Omega$ and its exact value is given by
\begin{equation*}
S(N,s)=2^{2s}\pi^s\frac{\Gamma\left(\frac{N+2s}{2}\right)}{\Gamma\left(\frac{N-2s}{2}\right)}\left(\frac{\Gamma(\frac{N}{2})}{\Gamma(N)}\right)^{\frac{2s}{N}}.
\end{equation*}
Since it is not achieved in any bounded domain, one has
\begin{equation*}
S(N,s)\left(\int_{\mathbb{R}^N}|u|^{2_s^*}dx\right)^{\frac{2}{2_s^*}}\leq \int_{\mathbb{R}^{N}}|(-\Delta)^{\frac{s}2}u|^2dx,
\end{equation*}
for all $u\in H^s(\mathbb{R}^N)$. In the whole space, the latter inequality turns into equality for the family
\begin{equation*}
u_{\varepsilon}(x)=\frac{\varepsilon^{\frac{N-2s}{2}}}{(\varepsilon^2+|x|^2)^{\frac{N-2s}{2}}},
\end{equation*}
with arbitrary $\varepsilon>0$, (cf. \cite{bracoldepsan2013a}).

When mixed $\mathcal{D}$-$\mathcal{N}$ boundary conditions are considered the situation is quite similar since the Dirichlet condition is imposed on a set $\Sigma_{\mathcal{D}} \subset \partial \Omega$ such that $|\Sigma_{\mathcal{D}}|=\alpha\in(0,|\partial\Omega|)$.

\begin{definition}\label{defi_sob_const}
{\rm
The Sobolev constant relative to the Dirichlet boundary $\Sigma_{\mathcal{D}}$ is defined by
\begin{equation}\label{Sob_mix}
\widetilde{S}(\Sigma_{\mathcal{D}})=\inf_{\substack{u\in
H_{\Sigma_{\mathcal{D}}}^s(\Omega)\\ u\not\equiv
0}}\frac{\|u\|_{H_{\Sigma_{\mathcal{D}}}^s}^2}{\|u\|_{2_s^*}^2}.
\end{equation}
}
\end{definition}

Since $0<\alpha<|\partial\Omega|$ and $H_{0}^s(\Omega)\subsetneq H_{\Sigma_{\mathcal{D}}}^s(\Omega)$ we have
\begin{equation}\label{const}
0<\widetilde{S}(\Sigma_{\mathcal{D}})<S(N,s).
\end{equation}
Actually, by \cite[Proposition 3.6]{colort2019} we have $\widetilde{S}(\Sigma_{\mathcal{D}})\leq 2^{-\frac{2s}{N}}S(N,s)$ and, because of \cite[Theorem 2.9]{colort2019}, if $\widetilde{S}(\Sigma_{\mathcal{D}})<2^{-\frac{2s}{N}}S(N,s)$, then $\widetilde{S}(\Sigma_{\mathcal{D}})$ is attained. Due to \eqref{Sob_mix} and \eqref{const}, the following Sobolev-type inequality holds in the mixed boundary data framework.

\begin{lemma}\cite[Lemma 2.4]{colort2019}\label{lem:traceineq}
For all $u\in H_{\SD}^s(\Omega)$ it holds
\begin{equation}\label{mix_sobolev}
	\widetilde{S}(\Sigma_{\mathcal{D}})\left(\int_{\Omega}|u|^{2^*_s}dx\right)^{\frac{2}{2^*_s}}\leq \int_{\Omega}|(-\Delta)^{\frac{s}2}u|^2dx.
\end{equation}
\end{lemma}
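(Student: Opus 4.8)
The plan is to obtain \eqref{mix_sobolev} directly from the variational characterization \eqref{Sob_mix} of $\widetilde{S}(\Sigma_{\mathcal{D}})$, in exact analogy with the way the classical fractional Sobolev inequality \eqref{sobolev} encodes the infimum defining the Dirichlet constant $S(N,s)$. Fix $u\in H_{\SD}^s(\Omega)$. If $u\equiv 0$ both sides of \eqref{mix_sobolev} vanish and there is nothing to prove, so assume $u\not\equiv 0$; then $u$ is an admissible competitor in \eqref{Sob_mix}, whence $\widetilde{S}(\Sigma_{\mathcal{D}})\le \|u\|_{H_{\SD}^s}^2 / \|u\|_{2^*_s}^2$. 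Multiplying across by the finite and strictly positive quantity $\|u\|_{2^*_s}^2=\bigl(\int_{\Omega}|u|^{2^*_s}\,dx\bigr)^{2/2^*_s}$, and recalling from the functional setting that $\|u\|_{H_{\SD}^s}^2=\int_{\Omega}|(-\Delta)^{s/2}u|^2\,dx$, yields exactly \eqref{mix_sobolev}.

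The only ingredient behind this that deserves a comment is that \eqref{Sob_mix} genuinely defines a strictly positive number, equivalently that the embedding $H_{\SD}^s(\Omega)\hookrightarrow L^{2^*_s}(\Omega)$ --- used implicitly throughout the functional framework --- is continuous, so that the Rayleigh quotient stays bounded below away from $0$. Here assumption $(\Omega_3)$ is what enters: the positive $(N-1)$-dimensional measure of $\SD$ makes the first eigenvalue $\lambda_1^s$ of \eqref{eigenproblem} strictly positive, hence a Poincar\'e-type inequality $\|u\|_2\le C\|(-\Delta)^{s/2}u\|_2$ holds on $H_{\SD}^s(\Omega)$; combining this with the continuous inclusion $H_{\SD}^s(\Omega)\hookrightarrow H^s(\Omega)$ available for $s\in(1/2,1)$ (cf. Remark \ref{rem:range_s}) and the classical fractional Sobolev embedding $H^s(\Omega)\hookrightarrow L^{2^*_s}(\Omega)$ on the bounded Lipschitz domain $\Omega$ gives $\widetilde{S}(\Sigma_{\mathcal{D}})>0$. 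Together with $\widetilde{S}(\Sigma_{\mathcal{D}})<S(N,s)$, which follows from $H_0^s(\Omega)\subsetneq H_{\SD}^s(\Omega)$, this positivity is already recorded in \eqref{const}; in the write-up I would therefore simply invoke \eqref{const} (and \cite[Lemma 2.4]{colort2019}).

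In short, there is no genuine obstacle: \eqref{mix_sobolev} is a pointwise restatement of the infimum \eqref{Sob_mix}, and the single nontrivial fact it relies on --- the positivity of $\widetilde{S}(\Sigma_{\mathcal{D}})$ --- is standard and already available. The delicate part of the paper lies elsewhere, namely in the quantitative estimates involving $\widetilde{S}(\Sigma_{\mathcal{D}})$ and the harmonic means of adjacent eigenvalues carried out in the later lemmas, for which \eqref{mix_sobolev} serves merely as an input.
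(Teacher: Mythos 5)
Your proposal is correct and matches the paper's treatment: the paper states the lemma as an immediate consequence of the definition \eqref{Sob_mix} together with the positivity recorded in \eqref{const} (deferring details to \cite[Lemma 2.4]{colort2019}), which is exactly your argument of unfolding the infimum and justifying $\widetilde{S}(\Sigma_{\mathcal{D}})>0$ via the continuous embedding $H_{\SD}^s(\Omega)\hookrightarrow L^{2^*_s}(\Omega)$.
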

In what follows, we shall use the same symbols $c,C$, or some subscripted version of them, to denote possibly different positive constants. Moreover, if $A\subseteq H_{\SD}^s(\Omega)$ and $r>0$, we will denote by $B_r(A)$ and $\partial B_r(A)$ the sets
\begin{equation*}
B_r(A):=\{u\in A: \left\|u\right\|_{H_{\SD}^s}\leq r\}\qquad\text{and}\qquad \partial B_r(A):=\{u\in A: \left\|u\right\|_{H_{\SD}^s}= r\}.
\end{equation*}
\section{Linking geometry, $\nabla$- and $(PS)$-conditions}\label{nablasect}
The main tool we rely upon for the proof of Theorem \ref{mainresult} is  a ``mixed-type" theorem, also known as a $\nabla$-theorem, due to Marino and Saccon, in which properties of both a functional and its gradient are used (\!\!\cite[Theorem 2.10]{marsac1997some}). Basically, a linking structure together with a suitable condition of the gradient on certain subsets allow us to deduce the existence of multiple critical points. Before stating this theorem, let us fix some notation and give some definitions.

Let $X$ be a Hilbert space and $M$ a closed subspace of $X$, we denote by $\Pi_M:X\to M$ the orthogonal projection of $X$ on $M$ and by $d(u,M)\vcentcolon=\inf_{v\in M}d(u,v)$ the distance of $u\in X$ from $M$. In addition, let $I:X\to\R$ be a $C^1$ functional and $a,b\in\R\cup\{-\infty,+\infty\}$. With these ingredients at hand, we give the following definition.

\begin{definition}\label{def_nabla_condition}
{\rm	
We say that $(I,M,a,b)$ satisfy the $\nabla$-condition, abbreviated by $(\nabla)(I,M,a,b)$, if there exists $\gamma>0$ such that
$$
\inf\Big\{\| \Pi_M\nabla I(u)\|_{X}: u\in M, \; a\leq I(u)\leq b, \;d(u,M)\leq\gamma \Big\} >0.
$$
}
\end{definition}

\noindent We also recall that $I$ is said to satisfy the Palais-Smale condition at level $c\in\R$, $(PS)_c$ for short, if any sequence $\{u_j\}\subset X$ satisfying
$$
I(u_j)\to c, \quad I'(u_j)\to 0 \text{ in } X', \quad \text{as } j\to\infty,
$$
has a convergent subsequence.

\begin{theorem}\label{nablathm}
Let $X$ be a Hilbert space and let $X_i$, $i=1,2,3$, be three subspaces of $X$ such that $X=X_1\oplus X_2 \oplus X_3$, with $dim \ X_i <\infty$ for $i=1,2$. Denote by $\Pi_i:X\to X_i$ the orthogonal projection of $X$ on $X_i$. Let $I:X\to\R$ be a $C^1$ functional, $\varrho,\varrho',\varrho'',\varrho_1$ be such that $\varrho_1>0$, $0\leq\varrho'<\varrho<\varrho''$, and define
\begin{align*}
\Delta&\vcentcolon=\left\lbrace u\in X_1\oplus X_2: \varrho'\leq \left\|\Pi_2 u \right\|\leq\varrho'', \left\| \Pi_1 u\right\|\leq\varrho_1\right\rbrace,\\
T&\vcentcolon=\partial_{X_1\oplus X_2}\Delta.
\end{align*}
Suppose that
\begin{equation}
a'\vcentcolon=\sup I(T) <\inf I(\partial B_\varrho(X_2\oplus X_3))=\vcentcolon a''.
\end{equation}
Let $a,b\in\R$ be such that $a'<a<a''$ and $b>\sup I(\Delta)$, and assume that $(\nabla)(I,X_1\oplus X_3,a,b)$ and $(PS)_c$ hold, the latter for all $c\in[a,b]$.

Then, $I$ has at least two critical points in $I^{-1}([a,b])$. Moreover, if in addition it is satisfied that $\inf I(B_\varrho(X_2\oplus X_3))>-\infty$ 
and $(PS)_c$ holds for all $c\in[a_1,b]$, with $a_1<\inf I(B_\varrho(X_2\oplus X_3))$ 
then $I$ has another critical level in $[a_1,a']$.
\end{theorem}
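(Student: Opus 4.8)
The plan is to prove Theorem~\ref{nablathm} along the lines of Marino and Saccon (\cite{marsac1997some}): a purely topological linking between $T$ and $\partial B_\varrho(X_2\oplus X_3)$ produces, via a classical min-max together with $(PS)_c$, a first critical point, while a deformation lemma \emph{adapted to the $\nabla$-condition} (Definition~\ref{def_nabla_condition}) is what allows one to extract the second. Concretely, in Step~1 I would prove the \emph{linking inequality}: for every $\varphi\in C(\Delta,X)$ with $\varphi|_{T}=\mathrm{id}_{T}$, the image $\varphi(\Delta)$ meets $\partial B_\varrho(X_2\oplus X_3)$. Since $\Delta$ is a compact subset of the finite-dimensional space $X_1\oplus X_2$ with the product/annular structure $\{\|\Pi_1 u\|\le\varrho_1\}\times\{\varrho'\le\|\Pi_2 u\|\le\varrho''\}$, this is a standard Brouwer-degree argument: from the explicit description of the faces of $T$ and the ordering $0\le\varrho'<\varrho<\varrho''$ one checks that on the face $\|\Pi_1 u\|=\varrho_1$ one has $\Pi_1\varphi(u)=\Pi_1 u\ne 0$, while on the faces $\|\Pi_2 u\|\in\{\varrho',\varrho''\}$ one has $\|\Pi_{X_2\oplus X_3}\varphi(u)\|=\|\Pi_2 u\|\ne\varrho$; hence $\varphi(T)\cap\partial B_\varrho(X_2\oplus X_3)=\emptyset$ and the associated degree is nonzero, forcing an interior intersection. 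In particular $\sup_\Delta I\circ\varphi\ge\inf I(\partial B_\varrho(X_2\oplus X_3))=a''$ for every admissible $\varphi$.

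For the first critical point I would set $\mathcal F:=\{\varphi\in C(\Delta,X):\varphi|_T=\mathrm{id}\}$ and
\[
c_1:=\inf_{\varphi\in\mathcal F}\ \sup_{u\in\Delta}\,I(\varphi(u)).
\]
Step~1 gives $c_1\ge a''>a$, and $\varphi=\mathrm{id}$ gives $c_1\le\sup I(\Delta)<b$, so $c_1\in(a,b)$. If $c_1$ were a regular value, the quantitative deformation lemma — available because $(PS)_{c_1}$ holds — would provide $\varepsilon\in(0,(c_1-a)/2)$ and $\eta\in C(X,X)$ equal to the identity outside $I^{-1}([c_1-2\varepsilon,c_1+2\varepsilon])$ with $\eta(\{I\le c_1+\varepsilon\})\subseteq\{I\le c_1-\varepsilon\}$; choosing $\varphi\in\mathcal F$ with $\sup_\Delta I\circ\varphi<c_1+\varepsilon$, the map $\eta\circ\varphi$ still belongs to $\mathcal F$ (because $\sup I(T)=a'<a<c_1-2\varepsilon$ forces $\eta=\mathrm{id}$ on $\varphi(T)=T$) and satisfies $\sup_\Delta I\circ(\eta\circ\varphi)\le c_1-\varepsilon$, contradicting the definition of $c_1$. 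Hence $c_1$ is a critical value, giving a first critical point in $I^{-1}([a,b])$.

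The second critical point is the heart of the matter, and I expect it to be the main obstacle. The key device is a deformation lemma compatible with $M:=X_1\oplus X_3$: since $(\nabla)(I,M,a,b)$ supplies $\gamma>0$ with $\|\Pi_M\nabla I\|$ bounded away from zero on $\{d(\cdot,M)\le\gamma\}\cap I^{-1}([a,b])$ (so that, combined with $(PS)_c$, the critical set in the band is compact and disjoint from the tube $\{d(\cdot,M)\le\gamma\}=\{\|\Pi_2\cdot\|\le\gamma\}$), one constructs a locally Lipschitz pseudo-gradient field $V$ that decreases $I$, vanishes near the critical set and outside the band, and is \emph{reinforced in the $M$-direction inside the tube}, so that the tube is positively invariant under the induced flow — equivalently, the flow never lets $\|\Pi_2\cdot\|$ grow where it is already small. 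With such $\nabla$-adapted flows one runs a \emph{second}, enlarged linked min-max (over a class of admissible maps larger than $\mathcal F$, e.g. defined on the full disc $\Delta''=\{u\in X_1\oplus X_2:\|\Pi_1 u\|\le\varrho_1,\ \|\Pi_2 u\|\le\varrho''\}$): the $\nabla$-condition is precisely what confines the resulting value $c_2\ge c_1$ to $[a,b]$, and $c_2$ is again a critical value; a relative-category / min-max multiplicity estimate — legitimate exactly because the invariance of the tube prevents admissible deformations from ``falling into'' $M$ — then shows that $c_1=c_2$ can occur only if the critical set at that level is topologically nontrivial, hence not a singleton. In all cases $I$ has at least two critical points in $I^{-1}([a,b])$. \textbf{The construction and use of this $\nabla$-adapted deformation lemma is the hard part}: it is the only place where the quantitative content of the $\nabla$-condition is genuinely used, and it requires a delicate interplay between the pseudo-gradient construction, the splitting $X=M\oplus X_2$, and $(PS)_c$.

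Finally, under the supplementary hypotheses $\inf I(B_\varrho(X_2\oplus X_3))>-\infty$ and $(PS)_c$ for $c\in[a_1,b]$ with $a_1<\inf I(B_\varrho(X_2\oplus X_3))$, I would obtain one further critical level in $[a_1,a']$ by a minimization-type argument of the same flavour, again using the $\nabla$-adapted deformations together with the sublevel $\{I\le a'\}$ and the ball $B_\varrho(X_2\oplus X_3)$; the boundedness from below is exactly what keeps the corresponding value from escaping to $-\infty$, while $a'=\sup I(T)$ provides the upper bound. The details of this last part parallel the classical local-linking/minimization arguments and are routine once the $\nabla$-adapted deformation lemma of Step~3 is available.
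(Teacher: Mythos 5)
A preliminary remark on the comparison you ask for: the paper does not prove Theorem \ref{nablathm} at all. It is quoted from Marino and Saccon \cite[Theorem 2.10]{marsac1997some} and used as a black box, so there is no internal proof to measure your argument against; your proposal is in effect an attempt to reconstruct the Marino--Saccon proof from scratch, and it has to be judged on its own completeness.

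On that score there is a genuine gap. Your Steps 1--2 (linking plus a standard minimax with the quantitative deformation lemma, producing one critical value $c_1\in[a'',\sup I(\Delta)]\subset(a,b)$) are right in outline, although the degree argument is not correct as written: the map $u\mapsto\bigl(\Pi_1\varphi(u),\|\Pi_{X_2\oplus X_3}\varphi(u)\|\bigr)$ sends the $(\dim X_1+\dim X_2)$-dimensional set $\Delta$ into $X_1\times\R$, so Brouwer degree does not apply directly once $\dim X_2\geq 2$; one must instead use a map such as $u\mapsto \Pi_1\varphi(u)+\|(I-\Pi_1)\varphi(u)\|\,\Pi_2u/\|\Pi_2 u\|$ (which needs $\varrho'>0$, with a separate connectedness/Borsuk-type argument in the case $\varrho'=0$ allowed by the statement). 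The decisive gap, however, is your Step 3. Everything that distinguishes a $\nabla$-theorem from the ordinary linking theorem --- namely the mechanism by which $(\nabla)(I,X_1\oplus X_3,a,b)$ forces a \emph{second} critical point in $I^{-1}([a,b])$ --- is only described programmatically: you postulate a ``$\nabla$-adapted deformation lemma'' (a pseudo-gradient field making the tube $\{\|\Pi_2\cdot\|\leq\gamma\}$ positively invariant), an enlarged minimax class, and a relative-category argument in case $c_1=c_2$, and you yourself flag this as the hard part without constructing the vector field, proving the invariance of the tube compatibly with keeping the admissible class and the level above $a$, or proving the multiplicity assertion at a common level. The same applies to the final critical level in $[a_1,a']$, which is again only sketched. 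As it stands, the proposal proves (modulo the degree fix) the existence of \emph{one} critical point, not the statement of Theorem \ref{nablathm}; to complete it you must either reproduce the deformation construction of \cite{marsac1997some} in detail or, as the paper does, simply invoke that reference.
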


In order to use Theorem \ref{nablathm} to prove Theorem \ref{mainresult} we start by checking that the energy functional $I_\lambda$ defined in \eqref{functional_down} has the linking geometry stated in Theorem \ref{nablathm}.

Let $\{\Lambda_k\}$ be the eigenvalues of $(-\Delta)^s$ in $\Omega$ with mixed Dirichlet-Neumann boundary conditions, in increasing order and counted with their multiplicity,
$$
0<\Lambda_1<\Lambda_2\leq\ldots\Lambda_k\leq\Lambda_{k+1}\leq\ldots, \quad \Lambda_k\to +\infty \text{ as } k\to\infty,
$$
where, by definition, $\Lambda_k=\lambda_k^s$, $k\in\N$. For every $k\geq 1$, set
$$
\mathbb{H}_k\vcentcolon=\text{span}\{\varphi_1,\ldots,\varphi_k\}
$$
and
$$
\mathbb{P}_k\vcentcolon=\{u\in H_{\SD}^s(\Omega): \left\langle u,\varphi_j\right\rangle_{H_{\SD}^s}=0, \text{ for all } j=1,\ldots,k\},
$$
where $\varphi_k$ is the eigenfunction corresponding to $\Lambda_k$. In this setting, we have the following minimax characterization of the eigenvalues.
\begin{lemma}\cite[Lemma 6]{mov2023}\label{varcharacteigen}
For any $k\in\mathbb{N}$, one has
$$
\Lambda_k=\inf_{u\in \mathbb{P}_{k-1}}\frac{\left\|u\right\|_{H_{\SD}^s}^2}{\left\| u\right\|_{2}^2}=\sup_{u\in \mathbb{H}_{k}}\frac{\left\|u\right\|_{H_{\SD}^s}^2}{\left\| u\right\|_{2}^2}.
$$
\end{lemma}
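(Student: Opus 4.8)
The plan is to reduce the statement to a termwise estimate on Fourier coefficients in the orthonormal basis $\{\varphi_j\}_{j\ge1}$; this is the classical Courant--Fischer characterization transplanted to the present fractional, mixed-boundary setting, so the argument is short.

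\emph{Step 1 (passage to coordinates).} Since $-\Delta$ with mixed $\mathcal{D}$-$\mathcal{N}$ boundary conditions has compact resolvent, $\{\varphi_j\}_{j\ge1}$ is a complete orthonormal system in $L^2(\Omega)$. Writing $u=\sum_{j\ge1}a_j\varphi_j\in H_{\SD}^s(\Omega)$ with $a_j=\langle u,\varphi_j\rangle_2$, Parseval's identity together with the very definition of the space give
\[
\|u\|_2^2=\sum_{j\ge1}a_j^2,\qquad \|u\|_{H_{\SD}^s}^2=\sum_{j\ge1}\lambda_j^s a_j^2=\sum_{j\ge1}\Lambda_j a_j^2<+\infty,
\]
so that, for $u\not\equiv0$,
\[
R(u):=\frac{\|u\|_{H_{\SD}^s}^2}{\|u\|_2^2}=\frac{\sum_{j\ge1}\Lambda_j a_j^2}{\sum_{j\ge1}a_j^2}.
\]
Moreover, from $\langle u,\varphi_j\rangle_{H_{\SD}^s}=\langle(-\Delta)^{s/2}u,(-\Delta)^{s/2}\varphi_j\rangle_2=\lambda_j^s\langle u,\varphi_j\rangle_2=\Lambda_j a_j$ and $\Lambda_j>0$, one sees that $u\in\mathbb{P}_{k-1}$ if and only if $a_1=\dots=a_{k-1}=0$, i.e. $u=\sum_{j\ge k}a_j\varphi_j$.

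\emph{Step 2 (the two minimax values).} If $u\in\mathbb{P}_{k-1}\setminus\{0\}$, then $a_j=0$ for $j<k$ and, since $\Lambda_j\ge\Lambda_k$ for all $j\ge k$, Step 1 gives $R(u)\ge\Lambda_k$; as $\varphi_k\in\mathbb{P}_{k-1}$ with $R(\varphi_k)=\Lambda_k$, we conclude $\inf_{\mathbb{P}_{k-1}}R=\Lambda_k$, the infimum being attained at $\varphi_k$. Symmetrically, if $u\in\mathbb{H}_k\setminus\{0\}$, then $a_j=0$ for $j>k$ and $\Lambda_j\le\Lambda_k$ for $j\le k$ yields $R(u)\le\Lambda_k$, while $\varphi_k\in\mathbb{H}_k$ again realizes the value $\Lambda_k$, so $\sup_{\mathbb{H}_k}R=\Lambda_k$. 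Combining the two identities proves the lemma.

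\emph{On the difficulties.} There is no genuine obstacle here; the result is exactly \cite[Lemma 6]{mov2023}. The only points worth a line of justification are the completeness of $\{\varphi_j\}$ in $L^2(\Omega)$ and the legitimacy of the termwise series manipulations in Step 1 — both provided by the spectral construction of $(-\Delta)^s$ and the definition of $H_{\SD}^s(\Omega)$ recalled in Section \ref{functionalsettings} — together with the elementary fact $\Lambda_j>0$, which is precisely what turns $H_{\SD}^s$-orthogonality to $\varphi_1,\dots,\varphi_{k-1}$ into the vanishing of the corresponding Fourier coefficients.
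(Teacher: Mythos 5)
The paper does not prove this lemma but simply cites \cite[Lemma 6]{mov2023}, and your argument is the standard Courant--Fischer/Rayleigh-quotient proof via expansion in the orthonormal eigenbasis, which is correct and is exactly the expected route. The one step worth making explicit — that $H_{\SD}^s$-orthogonality to $\varphi_1,\dots,\varphi_{k-1}$ is equivalent to the vanishing of the corresponding $L^2$ Fourier coefficients because $\Lambda_j>0$ — you do justify, so nothing is missing.
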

		
We say that $\Lambda_k$, $k\geq 2$, has multiplicity $m\in\N$ if
$$
\Lambda_{k-1}<\Lambda_k=\Lambda_{k+1}=\ldots=\Lambda_{k+m-1}<\Lambda_{k+m};
$$
in this case the eigenspace associated with $\Lambda_k$ coincides with $\text{span}\{\varphi_k,\ldots,\varphi_{k+m-1}\}$. We set
\begin{equation*}
X_1\vcentcolon=\mathbb{H}_{k-1}, \quad X_2\vcentcolon=\text{span}\{\varphi_k,\ldots,\varphi_{k+m-1}\},\quad X_3\vcentcolon=\mathbb{P}_{k+m-1}.
\end{equation*}
\begin{proposition}\label{prop_supinf}
Let $k\in\N$, $k\geq 2$, and let $m\in\N$ be such that $\Lambda_{k-1}<\lambda<\Lambda_k=\ldots=\Lambda_{k+m-1}<\Lambda_{k+m}$. Then there exist $R,\varrho\in\R$, with $R>\varrho>0$, such that
\begin{equation*}
l_1\vcentcolon=\sup_{u\in B_R(X_1)\cup\partial B_R(X_1\oplus X_2) } I_\lambda(u) < \inf_{u\in \partial B_{\varrho}(X_2\oplus X_3)} I_\lambda(u)\vcentcolon=l_2.
	\end{equation*}
\end{proposition}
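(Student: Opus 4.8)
The plan is to establish the two inequalities of the linking geometry by estimating $I_\lambda$ separately on the three relevant sets. First I would control $I_\lambda$ from above on $B_R(X_1)$. On $X_1=\mathbb{H}_{k-1}$, for any $u=\sum_{j=1}^{k-1}a_j\varphi_j$ one has $\|u\|_{H^s_{\SD}}^2=\sum a_j^2\Lambda_j\le\Lambda_{k-1}\|u\|_2^2$, so by the variational characterization in Lemma \ref{varcharacteigen} and the hypothesis $\lambda>\Lambda_{k-1}$ the quadratic part $\frac12\|u\|_{H^s_{\SD}}^2-\frac\lambda2\|u\|_2^2\le\frac12(\Lambda_{k-1}-\lambda)\|u\|_2^2\le 0$; dropping the (negative) critical term gives $I_\lambda(u)\le 0$ on all of $X_1$, hence in particular $\sup_{B_R(X_1)}I_\lambda\le 0$ for every $R>0$. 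Next I would bound $I_\lambda$ from above on $\partial B_R(X_1\oplus X_2)$. Here $X_1\oplus X_2=\mathbb{H}_{k+m-1}$ is finite-dimensional, so all norms are equivalent; using $\|u\|_{H^s_{\SD}}^2\le\Lambda_{k+m-1}\|u\|_2^2=\Lambda_k\|u\|_2^2$ on this space together with $\lambda<\Lambda_k$ one still only gets that the quadratic part is bounded, but the key point is that on a finite-dimensional space the term $-\frac{1}{2_s^*}\|u\|_{2_s^*}^{2_s^*}$ dominates: there is $c>0$ with $\|u\|_{2_s^*}^{2_s^*}\ge c\|u\|_{H^s_{\SD}}^{2_s^*}$, so $I_\lambda(u)\le C\|u\|_{H^s_{\SD}}^2-\frac{c}{2_s^*}\|u\|_{H^s_{\SD}}^{2_s^*}\to-\infty$ as $\|u\|_{H^s_{\SD}}=R\to\infty$. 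Thus for $R$ large enough $\sup_{\partial B_R(X_1\oplus X_2)}I_\lambda<0$ as well, so $l_1\le 0$ (indeed $l_1$ can be made negative, but $l_1\le 0$ suffices).

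For the lower bound on $\partial B_\varrho(X_2\oplus X_3)$, note $X_2\oplus X_3=\mathbb{P}_{k-1}$, so by Lemma \ref{varcharacteigen} any $u\in X_2\oplus X_3$ satisfies $\|u\|_2^2\le\Lambda_k^{-1}\|u\|_{H^s_{\SD}}^2$. Hence
\begin{equation*}
I_\lambda(u)\ge\frac12\Big(1-\frac{\lambda}{\Lambda_k}\Big)\|u\|_{H^s_{\SD}}^2-\frac{1}{2_s^*}\|u\|_{2_s^*}^{2_s^*}.
\end{equation*}
Since $\lambda<\Lambda_k$ the coefficient $\kappa:=\frac12(1-\lambda/\Lambda_k)>0$, and by the Sobolev-type inequality \eqref{mix_sobolev} of Lemma \ref{lem:traceineq} we have $\|u\|_{2_s^*}^{2_s^*}\le\widetilde S(\Sigma_{\mathcal D})^{-2_s^*/2}\|u\|_{H^s_{\SD}}^{2_s^*}$, so with $r=\|u\|_{H^s_{\SD}}$,
\begin{equation*}
I_\lambda(u)\ge\kappa r^2-C r^{2_s^*}=r^2\big(\kappa-Cr^{2_s^*-2}\big).
\end{equation*}
Because $2_s^*>2$, the right-hand side is strictly positive for all sufficiently small $r>0$; choosing $\varrho>0$ small enough that $\kappa-C\varrho^{2_s^*-2}>0$ gives $l_2=\inf_{\partial B_\varrho(X_2\oplus X_3)}I_\lambda\ge\varrho^2(\kappa-C\varrho^{2_s^*-2})>0$.

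Finally I would assemble the two estimates: with $\varrho$ chosen as above we have $l_2>0\ge l_1$, and then enlarging $R$ if necessary (which only decreases $l_1$, as shown, and does not affect $l_2$) we may also ensure $R>\varrho$. This yields the claimed $R>\varrho>0$ with $l_1<l_2$. I expect the main technical point to be the uniformity of the constants on the finite-dimensional space $X_1\oplus X_2$ — one must make sure the equivalence-of-norms constant $c$ in $\|u\|_{2_s^*}^{2_s^*}\ge c\|u\|_{H^s_{\SD}}^{2_s^*}$ and the bound $\|u\|_{H^s_{\SD}}^2\le\Lambda_k\|u\|_2^2$ hold simultaneously, and that the blow-up of $-\frac{1}{2_s^*}\|u\|_{2_s^*}^{2_s^*}$ is genuinely at rate $r^{2_s^*}$ there; everything else is a routine sign count using $\Lambda_{k-1}<\lambda<\Lambda_k$ and $2_s^*>2$.
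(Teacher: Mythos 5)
Your proof is correct and follows essentially the same route as the paper's: $I_\lambda\le 0$ on $X_1$ from $\lambda>\Lambda_{k-1}$, $I_\lambda\le 0$ on $\partial B_R(X_1\oplus X_2)$ for large $R$ via the finite-dimensional norm equivalence making the critical term dominate, and $I_\lambda\ge c_1\varrho^2(1-c_2\varrho^{2_s^*-2})>0$ on $\partial B_\varrho(X_2\oplus X_3)$ via Lemma \ref{varcharacteigen} and the Sobolev inequality \eqref{mix_sobolev}. No changes needed.
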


\begin{proof}
Given $u\in X_2\oplus X_3=\mathbb{P}_{k-1}$, by Lemma \ref{varcharacteigen} and the fractional Sobolev inequality \eqref{mix_sobolev} we get
\begin{align*}
I_\lambda(u) &= \frac{1}{2}\left\| u\right\|_{H_{\SD}^s}^2 -\frac{\lambda}{2}\left\|u\right\|_{2}^2-\frac{1}{2_s^*}\|u\|_{2_s^*}^{2_s^*}\\
&\geq \frac{1}{2}\left(1-\frac{\lambda}{\Lambda_k}\right)\left\| u\right\|_{H_{\SD}^s}^2-\frac{1}{2_s^* [\widetilde{S}(\Sigma_{\mathcal{D}})]^{\frac{2_s^*}{2}}}\|u\|_{H_{\SD}^s}^{2_s^*}\\
&=c_1\| u\|_{H_{\SD}^s}^2\left(1-c_2\left\| u\right\|_{H_{\SD}^s}^{2_s^*-2}\right)
\end{align*}
for suitable constants $c_1,c_2>0$. Then, if $u\in \partial B_{\varrho}(X_2\oplus X_3)$ with $\varrho\in\left(0,c_2^{-1/(2_s^*-2)}\right)$, we get
\begin{equation*}
I_\lambda(u)\geq c_1\varrho^2(1-c_2\varrho^{2_s^*-2})>0,
\end{equation*}
and, as a result, $l_2>0$. Now, let $u\in X_1=\mathbb{H}_{k-1}$, i.e.,
\begin{equation*}
u(x)=\sum_{i=1}^{k-1}\alpha_i\varphi_i(x),
\end{equation*}
with $\alpha_i=\langle u,\varphi_i\rangle_{2},\ i=1,2,\ldots,k-1$. By Lemma \ref{varcharacteigen} one has
\begin{equation}\label{diseqinHk-1}
\begin{split}	
I_\lambda(u) &\leq\frac{\Lambda_{k-1}}{2}\left\| u\right\|_{2}^2-\frac{\lambda}{2} \left\| u \right\|_{2}^2 -\frac{1}{2_s^*}\left\| u \right\|_{L^{2^*_s}(\Omega)}^{2^*_s}\leq \frac{\Lambda_{k-1}-\lambda}{2}\left\| u\right\|_{2}^2\\
&\leq 0,
\end{split}
\end{equation}
since $\Lambda_{k-1}<\lambda$. On the other hand, since
\begin{equation*}
I_\lambda(u)\leq \frac{1}{2}\left\| u\right\|_{H_{\SD}^s}^2 - \frac{1}{2_s^*}\|u\|_{2_s^*}^{2_s^*},
\end{equation*}
given $u\in X_1\oplus X_2=\mathbb{H}_k$ and taking into account that $\mathbb{H}_k$ is finite-dimensional, for suitable $c_3,c_4>0$, we get
\begin{equation*}
c_3 \left\| u\right\|_{H^s_{\SD}(\Omega)}\leq\|u\|_{2_s^*} \leq c_4 \left\| u\right\|_{H^s_{\SD}(\Omega)}.
\end{equation*}
We then get
\begin{equation}\label{ineqilambda}
I_\lambda(u)\leq\frac{1}{2}\left\| u\right\|_{H_{\SD}^s}^2 - \frac{c_3^{2_s^*}}{2_s^*}\left\| u\right\|_{H^s_{\SD}}^{2_s^*}.
\end{equation}
Taking $R>0$ large enough, we conclude that $I_\lambda(u)\leq 0$ for $u\in \partial B_R(X_1\oplus X_2)$. This, together with \eqref{diseqinHk-1} implies that $l_1\leq 0$.
\end{proof}

Now we address the fulfillment of the $\nabla$-condition for $I_\lambda$. This will require some preliminary steps.
First, we prove the $(PS)_c$ condition for suitable levels $c\in\mathbb{R}$.
\begin{proposition}\label{propPS}
Let $\lambda\in(0,+\infty)$. Then, the functional $I_\lambda$ satisfies the $(PS)_c$ condition at any level 
\begin{equation}\label{level}
c<c^*:=\frac{s}{N}[\widetilde{S}(\Sigma_{\mathcal{D}})]^{\frac{N}{2s}}.
\end{equation}
\end{proposition}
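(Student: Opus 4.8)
The plan is to prove the Palais--Smale condition below the threshold $c^*$ via the standard concentration-compactness argument adapted to the mixed boundary setting. First I would take a sequence $\{u_j\}\subset H_{\Sigma_{\mathcal D}}^s(\Omega)$ with $I_\lambda(u_j)\to c<c^*$ and $I_\lambda'(u_j)\to 0$ in $H^{-s}(\Omega)$, and show it is bounded. This follows from the usual trick: compute $I_\lambda(u_j)-\frac{1}{2_s^*}\langle I_\lambda'(u_j),u_j\rangle_{H^{-s}}=\left(\frac12-\frac1{2_s^*}\right)\big(\|u_j\|_{H_{\Sigma_{\mathcal D}}^s}^2-\lambda\|u_j\|_2^2\big)$, which equals $\frac{s}{N}\big(\|u_j\|_{H_{\Sigma_{\mathcal D}}^s}^2-\lambda\|u_j\|_2^2\big)$; combined with $\|u_j\|_2^2\le C\|u_j\|_{H_{\Sigma_{\mathcal D}}^s}^2$ (continuity of the embedding) and the sublinear error coming from $I_\lambda'(u_j)\to0$, one bootstraps to $\|u_j\|_{H_{\Sigma_{\mathcal D}}^s}\le C$ --- though a small care is needed when $\lambda$ is large, which is handled because the $|u|^{2_s^*}$ term dominates. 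Hence, up to a subsequence, $u_j\rightharpoonup u$ weakly in $H_{\Sigma_{\mathcal D}}^s(\Omega)$, $u_j\to u$ strongly in $L^2(\Omega)$ (compact embedding, $N>2s$), and $u_j\to u$ a.e.

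Next I would set $v_j:=u_j-u$, so $v_j\rightharpoonup 0$. By the Brezis--Lieb lemma, $\|u_j\|_{2_s^*}^{2_s^*}=\|u\|_{2_s^*}^{2_s^*}+\|v_j\|_{2_s^*}^{2_s^*}+o(1)$ and $\|u_j\|_{H_{\Sigma_{\mathcal D}}^s}^2=\|u\|_{H_{\Sigma_{\mathcal D}}^s}^2+\|v_j\|_{H_{\Sigma_{\mathcal D}}^s}^2+o(1)$, while the $L^2$ term passes to the limit exactly. Using that $u$ is a weak solution (the limit of $\langle I_\lambda'(u_j),\varphi\rangle$ for fixed test functions), one deduces $\langle I_\lambda'(u_j),u_j\rangle_{H^{-s}}\to 0$ splits as $\langle I_\lambda'(u),u\rangle_{H^{-s}}+\big(\|v_j\|_{H_{\Sigma_{\mathcal D}}^s}^2-\|v_j\|_{2_s^*}^{2_s^*}\big)+o(1)$. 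Since $\langle I_\lambda'(u),u\rangle_{H^{-s}}=0$, we get $\|v_j\|_{H_{\Sigma_{\mathcal D}}^s}^2-\|v_j\|_{2_s^*}^{2_s^*}\to 0$; call the common limit $\ell\ge 0$. Similarly, plugging the Brezis--Lieb splittings into $I_\lambda(u_j)\to c$ and using $I_\lambda(u)\ge 0$ --- which holds because $u$ solves \eqref{problem}, so $I_\lambda(u)=\frac{s}{N}\|u\|_{2_s^*}^{2_s^*}\ge0$ --- yields $c=I_\lambda(u)+\frac{s}{N}\ell\ge \frac{s}{N}\ell$.

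To conclude, I would invoke the mixed Sobolev inequality \eqref{mix_sobolev}: $\widetilde S(\Sigma_{\mathcal D})\|v_j\|_{2_s^*}^2\le\|v_j\|_{H_{\Sigma_{\mathcal D}}^s}^2$. Passing to the limit and using $\|v_j\|_{H_{\Sigma_{\mathcal D}}^s}^2\to\ell$, $\|v_j\|_{2_s^*}^{2_s^*}\to\ell$, we obtain $\widetilde S(\Sigma_{\mathcal D})\,\ell^{2/2_s^*}\le\ell$, so either $\ell=0$ or $\ell\ge[\widetilde S(\Sigma_{\mathcal D})]^{N/(2s)}$. In the latter case $c\ge\frac{s}{N}[\widetilde S(\Sigma_{\mathcal D})]^{N/(2s)}=c^*$, contradicting $c<c^*$. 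Therefore $\ell=0$, i.e.\ $\|v_j\|_{H_{\Sigma_{\mathcal D}}^s}\to 0$, which is exactly $u_j\to u$ strongly in $H_{\Sigma_{\mathcal D}}^s(\Omega)$, establishing $(PS)_c$. The main obstacle is the bookkeeping in the Brezis--Lieb step: one must be careful that the cross terms in $\langle I_\lambda'(u_j),u_j\rangle$ genuinely vanish (using weak convergence against the fixed limit $u$ plus a.e.\ convergence for the critical term, e.g.\ via the Brezis--Lieb-type convergence $|u_j|^{2_s^*-2}u_j\rightharpoonup|u|^{2_s^*-2}u$ in $L^{2N/(N+2s)}$), and that the sign $I_\lambda(u)\ge 0$ is correctly justified; the rest is routine once the mixed Sobolev constant $\widetilde S(\Sigma_{\mathcal D})$ replaces the classical one in the final inequality.
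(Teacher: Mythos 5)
Your overall strategy is the same as the paper's: prove boundedness, identify the weak limit $u$ as a solution with $I_\lambda(u)=\frac{s}{N}\|u\|_{2_s^*}^{2_s^*}\ge 0$, use Brezis--Lieb to extract a common limit $\ell$ for $\|v_j\|_{H_{\Sigma_{\mathcal D}}^s}^2$ and $\|v_j\|_{2_s^*}^{2_s^*}$, apply \eqref{mix_sobolev} to get the dichotomy $\ell=0$ or $\ell\ge[\widetilde S(\Sigma_{\mathcal{D}})]^{N/(2s)}$, and rule out the second alternative via $c=I_\lambda(u)+\frac{s}{N}\ell<c^*$. Your second and third paragraphs match the paper's proof (the paper obtains $\|v_j\|_{H_{\Sigma_{\mathcal D}}^s}^2-\|v_j\|_{2_s^*}^{2_s^*}\to0$ from $\langle I_\lambda'(u_j)-I_\lambda'(u),u_j-u\rangle_{H^{-s}}\to0$ rather than from $\langle I_\lambda'(u_j),u_j\rangle_{H^{-s}}\to0$, but the two computations are interchangeable).

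The one step that does not work as written is the boundedness. Your identity $I_\lambda(u_j)-\frac{1}{2_s^*}\langle I_\lambda'(u_j),u_j\rangle_{H^{-s}}=\frac{s}{N}\bigl(\|u_j\|_{H_{\Sigma_{\mathcal D}}^s}^2-\lambda\|u_j\|_2^2\bigr)$ only yields $\|u_j\|_{H_{\Sigma_{\mathcal D}}^s}^2\le\lambda\|u_j\|_2^2+C(1+\|u_j\|_{H_{\Sigma_{\mathcal D}}^s})$, and feeding in $\|u_j\|_2^2\le C\|u_j\|_{H_{\Sigma_{\mathcal D}}^s}^2$ is circular: it closes only when $\lambda$ is below the first eigenvalue, whereas the proposition (and its application, with $\lambda$ near $\Lambda_k$, $k\ge2$) requires all $\lambda>0$. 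Saying that ``the $|u|^{2_s^*}$ term dominates'' is not an argument here, since in $I_\lambda(u_j)\le\kappa$ that term carries the wrong sign. The paper's route fixes this: first use the other combination, $I_\lambda(u_j)-\frac12\langle I_\lambda'(u_j),u_j\rangle_{H^{-s}}=\frac{s}{N}\|u_j\|_{2_s^*}^{2_s^*}$, to get $\|u_j\|_{2_s^*}^{2_s^*}\le C(1+\|u_j\|_{H_{\Sigma_{\mathcal D}}^s})$; then H\"older through $L^{2_s^*}$ (not the embedding into $H^s$), namely $\|u_j\|_2^2\le|\Omega|^{2s/N}\|u_j\|_{2_s^*}^2\le C(1+\|u_j\|_{H_{\Sigma_{\mathcal D}}^s})^{2/2_s^*}$, which is \emph{sublinear} in $\|u_j\|_{H_{\Sigma_{\mathcal D}}^s}$; substituting both bounds into $I_\lambda(u_j)\le\kappa$ gives $\frac12\|u_j\|_{H_{\Sigma_{\mathcal D}}^s}^2\le C(1+\|u_j\|_{H_{\Sigma_{\mathcal D}}^s})$ and hence boundedness for every $\lambda>0$. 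Once this step is repaired, the rest of your argument goes through.
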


\begin{proof}
As usual, we start by proving first that $(PS)$ sequences are bounded and then we finish by proving that such sequences satisfy the $(PS)_c$ condition for suitable levels $c$.

Let $c\in\R$ satisfy \eqref{level} above and $\{u_j\}_{j\in\mathbb{N}}\subset H_{\SD}^s(\Omega)$ be a $(PS)_c$ sequence for $I_\lambda$, namely
\begin{equation}\label{PS:1}
I_\lambda(u_j)\to c, \quad I_\lambda'(u_j)\to 0 \text{ in } H^{-s}(\Omega), \quad \text{as } j\to\infty.
\end{equation}
From \eqref{PS:1}, it follows that there exists $\kappa>0$ such that 
\begin{equation}\label{PS:2} 
\max\left\lbrace \left| I_\lambda(u_j)\right|, \left|\left\langle I'_\lambda(u_j),\frac{u_j}{\|u_j\|_{H_{\Sigma_{\mathcal{D}}}^s}}\right\rangle_{H^{-s}} \right|\right\rbrace \leq \kappa.
\end{equation}
Hence,
\begin{equation}\label{PS:3}
I_\lambda(u_j)-\frac{1}{2}\left\langle  I'_\lambda(u_j),u_j\right\rangle_{H^{-s}}\leq \kappa \left( 1+\|u_j\|_{H_{\Sigma_{\mathcal{D}}}^s}\right).
\end{equation}
Moreover, since
\[I_\lambda(u_j)-\frac{1}{2}\left\langle I'_\lambda(u_j),u_j\right\rangle_{H^{-s}}=\left(\frac{1}{2}-\frac{1}{2_s^*}\right)\|u_j\|_{2_s^*}^{2_s^*}=\frac{s}{N}\|u_j\|_{2_s^*}^{2_s^*},\]
because of \eqref{PS:3}, we have
\begin{equation}\label{PS:4}
\|u_j\|_{2_s^*}^{2_s^*}\leq c_1\left( 1+\|u_j\|_{H_{\Sigma_{\mathcal{D}}}^s}\right) 
\end{equation}
for a suitable constant $c_1>0$. Next, by H\"{o}lder's inequality, we get
\[\|u_j\|_{2}^2\leq |\Omega|^{\frac{2s}{N}}\|u_j\|_{2_s^*}^{2}\leq c_1^{\frac{2}{2_s^*}}\left(1+\|u_j\|_{H_{\Sigma_{\mathcal{D}}}^s}\right)^{\frac{2}{2_s^*}}\]
so that
\begin{equation}\label{PS:5}
\|u_j\|_{2}^2\leq c_2\left(1+\|u_j\|_{H_{\Sigma_{\mathcal{D}}}^s}\right)
\end{equation}
for some positive constant $c_2$. Thus, by \eqref{PS:1}, \eqref{PS:4} and \eqref{PS:5} we find
\begin{equation*}
\begin{split}
\kappa&\geq  I_\lambda(u_j)\\
&=\frac{1}{2}\|u_j\|_{H_{\Sigma_{\mathcal{D}}}^s}^2-\frac{\lambda}{2}\|u_j\|_{2}^2-\frac{1}{2_s^*}\|u_j\|_{2_s^*}^{2_s^*}
\geq\frac{1}{2}\|u_j\|_{H_{\Sigma_{\mathcal{D}}}^s}^2-c_3(1+\|u_j\|_{H_{\Sigma_{\mathcal{D}}}^s})
\end{split}
\end{equation*}
for some constant $c_3>0$. Thus, 
\begin{equation}\label{boundd}
(PS)_c \text{ sequences are bounded in } H_{\Sigma_{\mathcal{D}}}^s(\Omega). 
\end{equation}
Next we address their compactness. Since $\{u_j\}_{j\in\mathbb{N}}$ is bounded in $H_{\Sigma_{\mathcal{D}}}^s(\Omega)$, there exists a subsequence (still denoted by) $\{u_j\}_{j\in\mathbb{N}}$ such that $u_j\rightharpoonup u_\infty$ for some $u_\infty\in H_{\SD}^s(\Omega)$, i.e., as $j\to \infty$,
\begin{equation}\label{PS:6}
\left\langle u_j,\varphi \right\rangle_{H_{\SD}^s}\to\left\langle u_\infty,\varphi\right\rangle_{H_{\SD}^s}, \quad \text{for all } \varphi\in H_{\SD}^s(\Omega).
\end{equation}
Furthermore, by \eqref{PS:4},  \eqref{boundd}, the embedding $H_{\Sigma_{\mathcal{D}}}^s(\Omega)\hookrightarrow L^{r}(\Omega)$ for $r\in[1,2_s^*]$, and the fact that $L^{2_s^*}(\Omega)$ is a reflexive space, we have that, up to a subsequence,
\begin{equation}\label{PS:7}
u_j\rightharpoonup u_\infty \quad \text{ in } L^{2_s^*}(\Omega),
\end{equation}
\begin{equation}\label{PS:8}
u_j\to u_\infty \quad\text{ in } L^{2}(\Omega),
\end{equation}
and
\begin{equation*}
u_j\to u_\infty\quad\text{ a.e. in } \mathbb{R}^N,
\end{equation*}
as $j\to \infty$. On the other hand, \eqref{boundd} together with \eqref{PS:4}, implies that $\|u_j\|_{2_s^*}$ is uniformly bounded in $j$ and thus, the sequence $\{|u_j|^{2_s^*-2}u_j\}_{j\in\mathbb{N}}$ is bounded in $L^{\frac{2_s^*}{2_s^*-1}}(\Omega)$. Therefore, by \eqref{PS:7}, we get
\begin{equation}\label{PS:10}
|u_j|^{2_s^*-2}u_j \rightharpoonup |u_\infty|^{2_s^*-2}u_\infty \quad \text{ in }\ L^{\frac{2_s^*}{2_s^*-1}}(\Omega),
\end{equation}
as $j\to \infty$, so that
\begin{equation*}
\int_{\Omega}|u_j|^{2_s^*-2}u_j\,\varphi dx\to\int_{\Omega} |u_\infty|^{2_s^*-2}u_\infty\,\varphi dx \quad \text{ for all } \varphi\in L^{2_s^*}(\Omega),
\end{equation*}
as $j\to \infty$. In particular, as $H_{\Sigma_{\mathcal{D}}}^s(\Omega)\subset L^{2_s^*}(\Omega)$,
\begin{equation}\label{PS:12}
\int_{\Omega}|u_j|^{2_s^*-2}u_j\,\varphi dx\to\int_{\Omega} |u_\infty|^{2_s^*-2}u_\infty\,\varphi dx \quad\text{ for all } \varphi\in H_{\Sigma_{\mathcal{D}}}^s(\Omega),
\end{equation}
as $j\to \infty$. Because of \eqref{PS:1}, for any $\varphi\in H_{\Sigma_{\mathcal{D}}}^s(\Omega)$,
\begin{equation*}
\langle I_\lambda'(u_j), \varphi\rangle_{H^{-s}}  = \int_{\Omega}(-\Delta)^{\frac{s}{2}}u_j(-\Delta)^{\frac{s}{2}}\varphi\,dx-\lambda\int_{\Omega}u_j\varphi\,dx  - \int_{\Omega}|u_j|^{2_s^*-2}u_j\,\varphi\,dx\to 0,
\end{equation*}
as $j\to \infty$. Thus, by \eqref{PS:6}, \eqref{PS:8} and \eqref{PS:12} we find
\[\int_{\Omega}(-\Delta)^{\frac{s}{2}}u_\infty(-\Delta)^{\frac{s}{2}}\varphi\,dx-\lambda\int_{\Omega}u_\infty\varphi\,dx  - \int_{\Omega}|u_\infty|^{2_s^*-2}u_\infty\,\varphi\,dx=0.\]
Thus, $u_\infty$ solves \eqref{problem}. On the other hand, using $u_\infty$ as a test function in \eqref{weak_solution} we get
\begin{equation}\label{PS:13}
I_\lambda(u_\infty)=\left(\frac{1}{2}-\frac{1}{2^*_s}\right)\int_\Omega |u_\infty|^{2^*_s} dx = \frac{s}{N}\int_{\Omega}|u_\infty|^{2_s^*}dx.
\end{equation}
Also, by Brezis-Lieb's Lemma, one has
\begin{equation*}	
\int_\Omega \left| (-\Delta)^{s/2} u_j\right|^2  dx  = \int_\Omega \left| (-\Delta)^{s/2} (u_j-u_\infty)\right|^2 dx + \int_\Omega \left| (-\Delta)^{s/2} u_\infty\right|^2  dx +o(1)
\end{equation*}
and
\begin{equation}\label{brezislieb2}
	\int_\Omega \left|u_j\right|^{2^*_s}  dx  = \int_\Omega \left|u_j-u_\infty\right|^{2^*_s}  dx + \int_\Omega \left|u_\infty\right|^{2^*_s}  dx +o(1),
\end{equation}
as $j\to\infty$. By \eqref{PS:8} and the last two identities we get
\begin{equation}\label{PS:14}
	\begin{split}
	I_\lambda(u_j) =& \frac{1}{2} \left(\int_\Omega \left| (-\Delta)^{s/2} (u_j-u_\infty)\right|^2 dx + \int_\Omega \left| (-\Delta)^{s/2} u_\infty\right|^2  dx \right) -\frac{\lambda}{2}\int_\Omega |u_\infty|^2 dx\\
	&-\frac{1}{2^*_s}\left( \int_\Omega \left|u_j-u_\infty\right|^{2^*_s}  dx + \int_\Omega \left|u_\infty\right|^{2^*_s}  dx \right) +o(1)\\
	 =& I_\lambda(u_\infty) +\frac{1}{2}\int_\Omega \left| (-\Delta)^{s/2} (u_j-u_\infty)\right|^2 dx - \frac{1}{2^*_s} \int_\Omega \left|u_j-u_\infty\right|^{2^*_s}  dx + o(1)
\end{split}
\end{equation}
as $j\to\infty$. Notice also that, due to \eqref{PS:7}, \eqref{PS:10} and \eqref{brezislieb2}, we obtain
\begin{equation}\label{A3}
	\begin{split}
\int_\Omega\left( |u_j|^{2^*_s-2}u_j - |u_\infty|^{2^*_s-2}u_\infty\right)\left( u_j-u_\infty\right)  dx	& =\int_\Omega |u_j|^{2^*_s} dx +\int_\Omega |u_\infty|^{2^*_s-2}u_\infty u_j dx\\
& \;\;\; -\int_\Omega |u_j|^{2^*_s-2} u_ju_\infty dx +\int_\Omega |u_\infty|^{2^*_s}dx\\
& =\int_\Omega |u_j|^{2^*_s}dx - \int_\Omega |u_\infty|^{2^*_s}dx + o(1)\\
& =\int_\Omega |u_j-u_\infty|^{2^*_s}dx +o(1)
\end{split}
\end{equation}
as $j\to\infty$. Even more, one has
\begin{equation}\label{relazIprimo}
\left\langle I'_\lambda(u_j), u_j-u_\infty\right\rangle_{H^{-s}} = \left\langle I'_\lambda(u_j)-I'_\lambda(u_\infty), u_j-u_\infty\right\rangle_{H^{-s}} =o(1)
\end{equation}
as $j\to\infty$ and, by \eqref{PS:8} and \eqref{A3},
\begin{equation}\label{A5}
	\begin{split}
	\left\langle I'_\lambda(u_j)-I'_\lambda(u_\infty),u_j-u_\infty\right\rangle_{H^{-s}} & =\int_\Omega \left| (-\Delta)^{s/2}(u_j-u_\infty)\right|^2 dx -\lambda\int_\Omega |u_j-u_\infty|^2 dx\\
	&\;\;\; -\int_\Omega \left(|u_j|^{2^*_s -2}u_j - |u_\infty|^{2^*_s -2}u_\infty \right)\left( u_j-u_\infty\right) dx\\
	& =\!\int_\Omega \left| (-\Delta)^{s/2}(u_j-u_\infty)\right|^2\! dx -\!\!\int_\Omega |u_j-u_\infty|^{2^*_s}dx +o(1)
	\end{split}
\end{equation}
as $j\to\infty$. Combining \eqref{relazIprimo} and \eqref{A5} we obtain
\begin{equation}\label{PS:15}
\int_{\Omega}|(-\Delta)^{\frac{s}{2}}(u_j-u_\infty)|^2dx=\int_{\Omega}|u_j-u_\infty|^{2_s^*}dx+o(1).
\end{equation} 
Since the sequence $\{\|u_j\|_{H_{\Sigma_{\mathcal{D}}}^s}\}_{j\in\mathbb{N}}$ is bounded in $\mathbb{R}$, up to a subsequence
\begin{equation}\label{PS:18}
\|u_j-u_\infty\|_{H_{\Sigma_{\mathcal{D}}}^s}^2=\int_{\Omega}|(-\Delta)^{\frac{s}{2}}(u_j-u_\infty)|^2dx\to\ell\geq 0,
\end{equation}
and therefore, by \eqref{PS:15},
\begin{equation}\label{PS:19}
\int_{\Omega}|u_j-u_\infty|^{2_s^*}dx\to\ell,
\end{equation}
as $j\to \infty$. Because of \eqref{PS:18} and \eqref{PS:19} we find $\ell^{\frac{2}{2_s^*}}\widetilde{S}(\Sigma_{\mathcal{D}})\leq \ell$.
Thus, either $\ell=0$ or $\ell\geq [\widetilde{S}(\Sigma_{\mathcal{D}})]^{\frac{N}{2s}}$. 
Now, by \eqref{PS:15}, we have
\begin{equation*}
\begin{split}
\frac{1}{2}\int_{\Omega}|(-\Delta)^{\frac{s}{2}}(u_j-u_\infty)|^2dx-\frac{1}{2_s^*}\int_{\Omega}|u_j-u_\infty|^{2_s^*}dx
=&\left(\frac{1}{2}-\frac{1}{2_s^*}\right)\int_{\Omega}|(-\Delta)^{\frac{s}{2}}(u_j-u_\infty)|^2dx+o(1)\\
=&\frac{s}{N}\int_{\Omega}|(-\Delta)^{\frac{s}{2}}(u_j-u_\infty)|^2dx+o(1),
\end{split}
\end{equation*}
which, combined with \eqref{PS:14}, produces
\begin{equation}\label{PS:16}
I_\lambda(u_j)=I_\lambda(u_\infty)+\frac{s}{N}\int_{\Omega}|(-\Delta)^{\frac{s}{2}}(u_j-u_\infty)|^2dx+o(1),
\end{equation}
as $j\to \infty$. Thus, thanks to \eqref{PS:1} and \eqref{PS:16} above we conclude
\begin{equation}\label{PS:17}
c=I_\lambda(u_\infty)+\frac{s}{N}\int_{\Omega}|(-\Delta)^{\frac{s}{2}}(u_j-u_\infty)|^2dx+o(1),
\end{equation}
as $j\to \infty$. Because of \eqref{PS:13}, \eqref{PS:18} and \eqref{PS:17} we get 
\begin{equation}\label{PS:20}
c=I_\lambda(u_\infty)+\frac{s}{N}\ell\geq\frac{s}{N}\ell.
\end{equation}
In case of having $\ell\geq [\widetilde{S}(\Sigma_{\mathcal{D}})]^{\frac{N}{2s}}$, by \eqref{PS:20} above, it follows that
\[ c\geq \frac{s}{N}\ell\geq\frac{s}{N}[\widetilde{S}(\Sigma_{\mathcal{D}})]^{\frac{N}{2s}},\]
in contradiction with \eqref{level}. We conclude then $\ell=0$ so that, by \eqref{PS:18}, we get
\[\|u_j-u_\infty\|_{H_{\Sigma_{\mathcal{D}}}^s}^2\to0\qquad\text{as}\ j\to+\infty,\] 
and the proof is finished.
\end{proof}

Now, let us set 
\begin{equation*}
\mathbb{E}_{k,m}:=\text{\rm span}\{\varphi_k,\ldots,\varphi_{k+m-1}\}\quad\text{and}\quad\mathbb V_{k,m}:=\mathbb H_{k-1}\oplus \mathbb{P}_{k+m-1},
\end{equation*}
so that $\mathbb V_{k,m}^\perp=\mathbb{E}_{k,m}$ and $H_{\SD}^s(\Omega)=\mathbb V_{k,m}\oplus\mathbb{E}_{k,m}$, and the corresponding projection operators	
\begin{equation*}
\Pi_1\vcentcolon=\Pi_{\mathbb{E}_{k,m}}\quad\text{and}\quad\Pi_2\vcentcolon=\Pi_{\mathbb V_{k,m}}.
\end{equation*}

\begin{lemma}\label{uniquecritpoint}
Let $k\in\N$, $k\geq 2$, and let $\Lambda_k$ be an eigenvalue of multiplicity $m\in\N$. Then, for any $\lambda\in(\Lambda_{k-1},\Lambda_{k+m})$, setting
\begin{equation}\label{defepsilonlambda}
\varepsilon_\lambda:=\left(\min\left\{\frac{\lambda-\Lambda_{k-1}}{\Lambda_{k-1}},\frac{\Lambda_{k+m}-\lambda}{\Lambda_{k+m}}\right\}\right)^{\frac{N}{2s}}c^*,
\end{equation}
with $c^*$ being defined in \eqref{level}, the unique critical point $u$ of ${I_\lambda}_{|\mathbb{V}_{k,m}}$ with $I_\lambda(u)\in(-\varepsilon_\lambda,\varepsilon_\lambda)$ is the trivial one.
\end{lemma}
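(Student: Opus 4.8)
The plan is to show that every critical point $u$ of ${I_\lambda}_{|\mathbb{V}_{k,m}}$ with $I_\lambda(u)\in(-\varepsilon_\lambda,\varepsilon_\lambda)$ must vanish; since $\varepsilon_\lambda>0$ (the hypothesis $\lambda\in(\Lambda_{k-1},\Lambda_{k+m})$ makes both quotients in \eqref{defepsilonlambda} strictly positive) and $I_\lambda(0)=0$, this establishes that $u=0$ is the unique critical point with energy in that interval. Throughout I would write $u=u^-+u^+$ with $u^-:=\Pi_{\mathbb{H}_{k-1}}u\in\mathbb{H}_{k-1}$ and $u^+:=\Pi_{\mathbb{P}_{k+m-1}}u\in\mathbb{P}_{k+m-1}$; these two pieces are orthogonal both in $H_{\SD}^s(\Omega)$ and in $L^2(\Omega)$, being in the spans of disjoint families of eigenfunctions. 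Since $u$ is a critical point of the restricted functional and $u,u^-,u^+$ all lie in $\mathbb{V}_{k,m}=\mathbb{H}_{k-1}\oplus\mathbb{P}_{k+m-1}$, each of them is an admissible test function in $\langle I_\lambda'(u),\cdot\,\rangle_{H^{-s}}=0$.

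First I would extract information from three test functions. Testing with $u$ gives $\|u\|_{H_{\SD}^s}^2-\lambda\|u\|_2^2=\|u\|_{2_s^*}^{2_s^*}$, hence $I_\lambda(u)=\big(\tfrac12-\tfrac1{2_s^*}\big)\|u\|_{2_s^*}^{2_s^*}=\tfrac sN\|u\|_{2_s^*}^{2_s^*}\ge0$, so it suffices to bound $\|u\|_{2_s^*}^{2_s^*}$ from below whenever $u\neq0$. Testing with $u^+$ gives $A:=\|u^+\|_{H_{\SD}^s}^2-\lambda\|u^+\|_2^2=\int_\Omega|u|^{2_s^*-2}u\,u^+\,dx$; subtracting this from the first identity (equivalently, testing with $u^-=u-u^+$) and using orthogonality yields $\|u\|_{2_s^*}^{2_s^*}-A=\|u^-\|_{H_{\SD}^s}^2-\lambda\|u^-\|_2^2\le(\Lambda_{k-1}-\lambda)\|u^-\|_2^2\le0$ by Lemma \ref{varcharacteigen}, so $\|u\|_{2_s^*}^{2_s^*}\le A$. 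On the other hand, Lemma \ref{varcharacteigen} applied on $\mathbb{P}_{k+m-1}$ gives $\|u^+\|_2^2\le\Lambda_{k+m}^{-1}\|u^+\|_{H_{\SD}^s}^2$, so that $A\ge\tfrac{\Lambda_{k+m}-\lambda}{\Lambda_{k+m}}\|u^+\|_{H_{\SD}^s}^2\ge0$. In particular, if $u^+=0$ then $A=0$, whence $\|u\|_{2_s^*}^{2_s^*}\le0$ and $u=0$; from here on I assume $u^+\neq0$, so $A>0$.

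Next I would close the estimate by combining the mixed Sobolev inequality (Lemma \ref{lem:traceineq}) applied to $u^+$ with H\"older's inequality. From $\widetilde S(\SD)\|u^+\|_{2_s^*}^2\le\|u^+\|_{H_{\SD}^s}^2\le\tfrac{\Lambda_{k+m}}{\Lambda_{k+m}-\lambda}A$ and $A=\int_\Omega|u|^{2_s^*-2}u\,u^+\,dx\le\|u\|_{2_s^*}^{2_s^*-1}\|u^+\|_{2_s^*}$, dividing by $\sqrt A>0$ yields $A\le\tfrac{\Lambda_{k+m}}{(\Lambda_{k+m}-\lambda)\widetilde S(\SD)}\|u\|_{2_s^*}^{2(2_s^*-1)}$. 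Combined with $\|u\|_{2_s^*}^{2_s^*}\le A$ this gives, for $u\neq0$, $\|u\|_{2_s^*}^{2_s^*-2}\ge\tfrac{\Lambda_{k+m}-\lambda}{\Lambda_{k+m}}\widetilde S(\SD)$, and since $\tfrac{2_s^*}{2_s^*-2}=\tfrac N{2s}$ we obtain $\|u\|_{2_s^*}^{2_s^*}\ge\big(\tfrac{\Lambda_{k+m}-\lambda}{\Lambda_{k+m}}\widetilde S(\SD)\big)^{N/2s}$. Recalling $c^*=\tfrac sN[\widetilde S(\SD)]^{N/2s}$ and that the minimum in \eqref{defepsilonlambda} does not exceed $\tfrac{\Lambda_{k+m}-\lambda}{\Lambda_{k+m}}$, it follows that $I_\lambda(u)=\tfrac sN\|u\|_{2_s^*}^{2_s^*}\ge\tfrac sN\big(\tfrac{\Lambda_{k+m}-\lambda}{\Lambda_{k+m}}\widetilde S(\SD)\big)^{N/2s}\ge\varepsilon_\lambda$, contradicting $I_\lambda(u)<\varepsilon_\lambda$. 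Therefore $u=0$.

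The main obstacle is the third step: one has to recognize that it suffices to work with the ``upper'' component $u^+$ — bounding the quantity $A$ from below through the variational characterization of $\Lambda_{k+m}$ and from above through H\"older plus the mixed Sobolev constant — while the ``lower'' component $u^-$ only decreases $\|u\|_{2_s^*}^{2_s^*}$ and can simply be discarded. The rest is bookkeeping, the key point being the exponent identity $\tfrac{2_s^*}{2_s^*-2}=\tfrac N{2s}$, which is exactly what makes the resulting lower bound for $I_\lambda(u)$ match the form of $\varepsilon_\lambda$; via the value of $c^*$ from Proposition \ref{propPS} this keeps the threshold $\varepsilon_\lambda$ strictly below the Palais--Smale compactness level, as will be needed when applying the $\nabla$-theorem.
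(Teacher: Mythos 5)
Your argument is correct, and it shares the paper's overall skeleton --- use the critical point identity $I_\lambda(u)=\tfrac sN\|u\|_{2_s^*}^{2_s^*}$ and then force, for any nontrivial critical point of the restriction, a lower bound of the form $\|u\|_{2_s^*}^{2_s^*-2}\geq(\text{spectral gap})\cdot\widetilde S(\Sigma_{\mathcal D})$ via H\"older plus the mixed Sobolev inequality --- but the pivot is a different test function. The paper tests the Euler equation once, with $\psi=u^{\mathbb H}-u^{\mathbb P}$, which yields $\int_\Omega|u|^{2_s^*-2}u\,(u^{\mathbb H}-u^{\mathbb P})\,dx\le-\mathfrak m_\lambda\|u\|_{H_{\SD}^s}^2$ with $\mathfrak m_\lambda$ the minimum of the two gap quotients; this is precisely where the $\min$ in \eqref{defepsilonlambda} originates. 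You test with $u^+$ and $u^-$ separately: the $u^-$ equation gives $\|u\|_{2_s^*}^{2_s^*}\le A$ (the lower block only helps and is discarded), while the $u^+$ equation sandwiches $A$ between $\tfrac{\Lambda_{k+m}-\lambda}{\Lambda_{k+m}}\|u^+\|_{H_{\SD}^s}^2$ and $\|u\|_{2_s^*}^{2_s^*-1}\|u^+\|_{2_s^*}$. All intermediate steps check out (orthogonality of $u^\pm$ in both inner products, the degenerate case $u^+=0$, the exponent identity $\tfrac{2_s^*}{2_s^*-2}=\tfrac N{2s}$), and the net outcome is actually slightly stronger than the lemma: you get $I_\lambda(u)\ge\bigl(\tfrac{\Lambda_{k+m}-\lambda}{\Lambda_{k+m}}\bigr)^{N/2s}c^*$ for every nontrivial critical point, with only the upper gap entering, and the stated conclusion follows since $\varepsilon_\lambda$ never exceeds this quantity. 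The paper's symmetric form with the minimum is the one fed into Remark \ref{ra} and the case analysis in the proof of Theorem \ref{mainresult}; if one propagated your one-sided bound instead, the admissible $\delta_k$ there could be taken marginally larger.
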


\begin{proof}
By contradiction, assume that there exists $\bar\lambda\in(\Lambda_{k-1},\Lambda_{k+m})$ and $u_{\bar\lambda}\in \mathbb{V}_{k,m}\setminus\{0\}$, such that
\begin{equation}\label{relazJmuj}
\begin{split}
	&\left\langle I_{\bar\lambda}'(u_{\bar\lambda}),\psi\right\rangle_{H^{-s}}=0 \quad \text{for any } \psi\in \mathbb{V}_{k,m},\\
	&I_{\bar\lambda}(u_{\bar\lambda})\in(-\varepsilon_{\bar\lambda},\varepsilon_{\bar\lambda}).
\end{split}
\end{equation}
Testing the first equation of \eqref{relazJmuj} with $\psi=u_{\bar\lambda}$ it follows that
\begin{align*}
0 & = \left\| u_{\bar\lambda}\right\|_{H_{\SD}^s}^2 -\bar\lambda \left\| u_{\bar\lambda}\right\|_{2}^2-\left\| u_{\bar\lambda}\right\|_{2_s^*}^{2_s^*} =2I_{\bar\lambda}(u_{\bar\lambda}) +\frac{2}{2_s^*}\left\| u_{\bar\lambda}\right\|_{2_s^*}^{2_s^*}-\left\| u_{\bar\lambda}\right\|_{2_s^*}^{2_s^*}
\end{align*}
from which we conclude that
\begin{equation}\label{intFxUj}
\left\| u_{\bar\lambda}\right\|_{2_s^*}^{2_s^*}=\frac{N}{s}I_{\bar\lambda}(u_{\bar\lambda}).
\end{equation}
Now, let us write $u_{\bar\lambda}=u_{\bar\lambda}^{\mathbb{H}}+u_{\bar\lambda}^{\mathbb{P}}$, with $u_{\bar\lambda}^{\mathbb{H}}\in \mathbb{H}_{k-1}$ and $u_{\bar\lambda}^{\mathbb{P}}\in \mathbb{P}_{k+m-1}$. Taking $\psi=u_{\bar\lambda}^{\mathbb{H}}-u_{\bar\lambda}^{\mathbb{P}}$ in the first equation of \eqref{relazJmuj} and using Lemma \ref{varcharacteigen} we obtain
\begin{equation}\label{deltalambdakm}
	\begin{split}
	\int_\Omega |u_{\bar\lambda}|^{2_s^*-2}u_{\bar\lambda}\,(u_{\bar\lambda}^{\mathbb{H}}-u_{\bar\lambda}^{\mathbb{P}})dx & =\left\langle u_{\bar\lambda},u_{\bar\lambda}^{\mathbb{H}} \right\rangle_{H_{\SD}^s} - \left\langle u_{\bar\lambda},u_{\bar\lambda}^{\mathbb{P}} \right\rangle_{H_{\SD}^s} -\bar\lambda\int_\Omega u_{\bar\lambda}\left(u_{\bar\lambda}^{\mathbb{H}}-u_{\bar\lambda}^{\mathbb{P}}\right)dx\\
	& =\left\| u_{\bar\lambda}^{\mathbb{H}}\right\|_{H_{\SD}^s}^2-\left\| u_{\bar\lambda}^{\mathbb{P}}\right\|_{H_{\SD}^s}^2 \mkern-5mu -\bar\lambda\left\|u_{\bar\lambda}^{\mathbb{H}} \right\|_{2}^2 +\bar\lambda \left\|u_{\bar\lambda}^{\mathbb{P}} \right\|_{2}^2\\
	&\leq\left( 1-\frac{\bar\lambda}{\Lambda_{k-1}}\right)\left\| u_{\bar\lambda}^{\mathbb{H}}\right\|_{H_{\SD}^s}^2 \mkern-10mu + \left(\frac{\bar\lambda}{\Lambda_{k+m}}-1\right)\left\| u_{\bar\lambda}^{\mathbb{P}}\right\|_{H_{\SD}^s}^2 \\
	&\leq -\min\left\{\frac{\bar\lambda-\Lambda_{k-1}}{\Lambda_{k-1}},\frac{\Lambda_{k+m}-\bar\lambda}{\Lambda_{k+m}}\right\}\left\| u_{\bar\lambda}\right\|_{H_{\SD}^s}^2\\
	&=-\mathfrak{m_{\bar\lambda}}\left\| u_{\bar\lambda}\right\|_{H_{\SD}^s}^2,
\end{split}
\end{equation}	
where $\mathfrak{m_{\bar\lambda}}:=\min\left\{\frac{\bar\lambda-\Lambda_{k-1}}{\Lambda_{k-1}},\frac{\Lambda_{k+m}-\bar\lambda}{\Lambda_{k+m}}\right\}$ and we used the fact that $u_{\bar\lambda}^{\mathbb{H}}\perp u_{\bar\lambda}^\mathbb{P}$. On the other hand, because of H\"{o}lder's inequality and the Sobolev inequality \eqref{mix_sobolev} respectively, we obtain
\begin{equation}\label{ujtwostars}
\begin{split}	
	\left| \int_\Omega |u_{\bar\lambda}|^{2_s^*-2}u_j\,(u_{\bar\lambda}^{\mathbb{H}}-u_{\bar\lambda}^{\mathbb{P}})dx\right| & \leq \left\| |u_{\bar\lambda}|^{2_s^*-2}u_{\bar\lambda}\right\|_{\frac{2_s^*}{2_s^*-1}}\left\|u_{\bar\lambda}^{\mathbb{H}}-u_{\bar\lambda}^{\mathbb{P}} \right\|_{2_s^*}\\
    &  \leq \widetilde{S}(\Sigma_D)^{-\frac{1}{2}}\left\|u_{\bar\lambda}\right\|_{2_s^*}^{2^*_s-1}\left\|u_{\bar\lambda} \right\|_{H_{\SD}^s}.
\end{split}	
\end{equation}
Since $u_{\bar\lambda}\neq 0$, because of \eqref{mix_sobolev}, \eqref{deltalambdakm} and \eqref{ujtwostars}, we obtain
\begin{equation*}
\mathfrak{m_{\bar\lambda}}\widetilde{S}(\Sigma_D)^{\frac{1}{2}} \left\|u_{\bar\lambda}\right\|_{2_s^*}\leq\mathfrak{m_{\bar\lambda}}\left\| u_{\bar\lambda}\right\|_{H_{\SD}^s} \leq \widetilde{S}(\Sigma_D)^{-\frac{1}{2}} \left\|u_{\bar\lambda}\right\|_{2_s^*}^{2^*_s-1},
\end{equation*}
so that, by \eqref{intFxUj},
\begin{equation*}
\mathfrak{m_\lambda}\widetilde{S}(\Sigma_D)\leq\|u_{\bar\lambda}\|_{2_s^*}^{2_s^*-2} <\left(\frac{N}{s}\varepsilon_{\bar\lambda}\right)^{\frac{2s}{N}}=\mathfrak{m_{\bar\lambda}}\widetilde{S}(\Sigma_D),
\end{equation*}
clearly a contradiction.
\end{proof}

\begin{remark}\label{ra}
{\rm Setting $H_k^*:=HM(\Lambda_{k-1},\Lambda_{k+m})$ the harmonic mean of $\Lambda_{k-1}$ and $\Lambda_{k+m}$, it is easily verified that
\begin{equation}\label{disq}
        \varepsilon_\lambda=\left\{
        \begin{array}{ll}
        \left( \dfrac{\lambda-\Lambda_{k-1}}{\Lambda_{k-1}}\right) ^\frac{N}{2s} c^* & \textit{ if }\Lambda_{k-1}<\lambda\leq H_{k}^*, \\[2pt]
        \left( \dfrac{\Lambda_{k+m}-\lambda}{\Lambda_{k+m}}\right)^\frac{N}{2s} c^* & \text{ if } H_k^*<\lambda<\Lambda_{k+m}.
        \end{array}
        \right.
\end{equation}
As a result, for any $\lambda\in(\Lambda_{k-1},\Lambda_{k+m})$, one has 
\begin{equation}\label{llv1}
\varepsilon_\lambda\leq\varepsilon_{H_{k}^*}:=\left(\frac{\Lambda_{k+m}-\Lambda_{k-1}}{\Lambda_{k+m}+\Lambda_{k-1}}\right)^{\frac{N}{2s}}c^*<c^*.
\end{equation}
}
\end{remark}

\begin{lemma}\label{Ujbounded}
	Assume $k\in\N$, $k\geq 2$, and let  $\Lambda_k$ be an eigenvalue of multiplicity $m\in\N$, $\lambda\in\R$. Then, any sequence $\{u_j\}\subset H_{\SD}^s(\Omega)$ such that
	\begin{itemize}
		\item[$(i)$] $\sup_{j\in\N}|I_\lambda(u_j)|<+\infty$,
		\item[$(ii)$] $\Pi_1 [u_j]\to 0$ as $j\to\infty$ in $H_{\SD}^s(\Omega)$,
		\item[$(iii)$] $\Pi_2[\nabla I_\lambda(u_j)]\to 0$ as $j\to\infty$ in $H_{\SD}^s(\Omega)$,
	\end{itemize}
is bounded in $H_{\SD}^s(\Omega)$. Moreover, if we replace $(i)$ by 
\begin{itemize}
		\item[$(i^*)$] $\sup_{j\in\N}|I_\lambda(u_j)|<c^*$,
	\end{itemize}
then, up to a subsequence, $u_j$ strongly converges in $H_{\SD}^s(\Omega)$.
\end{lemma}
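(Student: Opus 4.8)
The plan is to decompose each $u_j$ along the orthogonal splitting $H_{\SD}^s(\Omega)=X_1\oplus X_2\oplus X_3$, writing $u_j=v_j+w_j+z_j$ with $v_j:=\Pi_{X_1}u_j$, $w_j:=\Pi_{\mathbb{E}_{k,m}}u_j=\Pi_1[u_j]$... — wait, one must be careful: here $\Pi_1=\Pi_{\mathbb{E}_{k,m}}$ projects onto the eigenspace of $\Lambda_k$ and $\Pi_2=\Pi_{\mathbb V_{k,m}}$ projects onto $\mathbb H_{k-1}\oplus\mathbb P_{k+m-1}$. So hypothesis $(ii)$ says $\Pi_{\mathbb E_{k,m}}u_j\to 0$, i.e. the component of $u_j$ in the eigenspace of $\Lambda_k$ vanishes; write $u_j=\sigma_j+\tau_j$ with $\sigma_j=\Pi_1 u_j\in\mathbb E_{k,m}$ (which $\to 0$) and $\tau_j=\Pi_2 u_j\in\mathbb V_{k,m}$, and further split $\tau_j=\tau_j^{\mathbb H}+\tau_j^{\mathbb P}$ with $\tau_j^{\mathbb H}\in\mathbb H_{k-1}$, $\tau_j^{\mathbb P}\in\mathbb P_{k+m-1}$. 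First I would test $\Pi_2[\nabla I_\lambda(u_j)]$ against $\tau_j^{\mathbb H}-\tau_j^{\mathbb P}$: using Lemma \ref{varcharacteigen} exactly as in the proof of Lemma \ref{uniquecritpoint} (equation \eqref{deltalambdakm}), the quadratic part gives a bound from above by $-\mathfrak{m}_\lambda\|\tau_j\|_{H_{\SD}^s}^2$ up to corrections coming from $\sigma_j$ (which are controlled since $\sigma_j\to 0$) and a term $o(1)\|\tau_j\|_{H_{\SD}^s}$ coming from $(iii)$. The nonlinear term is estimated by Hölder and \eqref{mix_sobolev} as in \eqref{ujtwostars}, yielding $\lesssim \|u_j\|_{2_s^*}^{2_s^*-1}\|\tau_j\|_{H_{\SD}^s}$. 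Hence
\begin{equation*}
\mathfrak{m}_\lambda\|\tau_j\|_{H_{\SD}^s}^2\leq C\|u_j\|_{2_s^*}^{2_s^*-1}\|\tau_j\|_{H_{\SD}^s}+o(1)\|\tau_j\|_{H_{\SD}^s}+o(1),
\end{equation*}
so $\|\tau_j\|_{H_{\SD}^s}\leq C\|u_j\|_{2_s^*}^{2_s^*-1}+o(1)$, and since $\sigma_j\to 0$ also $\|u_j\|_{H_{\SD}^s}\leq C\|u_j\|_{2_s^*}^{2_s^*-1}+o(1)$.

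Next I would exploit $(i)$ to close the estimate. From $\sup_j|I_\lambda(u_j)|<\infty$ and the expression \eqref{functional_down}, combined with $\|u_j\|_2^2\le|\Omega|^{2s/N}\|u_j\|_{2_s^*}^2$ and \eqref{mix_sobolev}, one gets an inequality relating $\|u_j\|_{H_{\SD}^s}^2$ and $\|u_j\|_{2_s^*}^{2_s^*}$; the trouble is that in the critical case $I_\lambda$ bounded does not by itself bound the norm (no $(PS)$-type $\tfrac12-\tfrac1{2_s^*}$ trick is available without the derivative). So instead I would feed the bound $\|u_j\|_{H_{\SD}^s}\lesssim \|u_j\|_{2_s^*}^{2_s^*-1}$ into \eqref{functional_down}: denoting $t_j:=\|u_j\|_{2_s^*}$, we obtain $|I_\lambda(u_j)|\gtrsim \|u_j\|_{H_{\SD}^s}^2 - C t_j^{2_s^*}\gtrsim \dots$ — more precisely, since $\|u_j\|_{H_{\SD}^s}^2\ge \widetilde S(\Sigma_{\mathcal D})t_j^2$, if $t_j\to\infty$ then from $\|u_j\|_{H_{\SD}^s}\le C t_j^{2_s^*-1}+o(1)$ and $I_\lambda(u_j)=\tfrac12\|u_j\|_{H_{\SD}^s}^2-\tfrac\lambda2\|u_j\|_2^2-\tfrac1{2_s^*}t_j^{2_s^*}$ being bounded, the dominant balance forces $\tfrac12\|u_j\|_{H_{\SD}^s}^2$ and $\tfrac1{2_s^*}t_j^{2_s^*}$ to be comparable, hence $\|u_j\|_{H_{\SD}^s}\sim t_j^{2_s^*/2}$, which together with $\|u_j\|_{H_{\SD}^s}\le Ct_j^{2_s^*-1}$ is consistent only if $t_j$ stays bounded (because $2_s^*/2<2_s^*-1$ fails — in fact $2_s^*-1>2_s^*/2\iff 2_s^*>2$, always true, so the two bounds go the wrong way and pin $t_j$). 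Carrying this comparison through rigorously gives $\{t_j\}$ bounded, whence $\{\|u_j\|_{H_{\SD}^s}\}$ bounded, proving the first assertion.

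For the second assertion, assume $(i^*)$: $\sup_j|I_\lambda(u_j)|<c^*$. By the first part $\{u_j\}$ is bounded, so up to a subsequence $u_j\rightharpoonup u_\infty$ in $H_{\SD}^s(\Omega)$, with the usual strong convergences in $L^2(\Omega)$ and a.e. A Brezis–Lieb analysis identical to that in Proposition \ref{propPS} shows that $u_\infty$ is a critical point of $I_{\lambda|\mathbb V_{k,m}}$... — here one needs $u_\infty\in\mathbb V_{k,m}$, which follows because $\Pi_1 u_j\to 0$ forces $\Pi_1 u_\infty=0$; and testing against $\psi\in\mathbb V_{k,m}$ and passing to the limit using $(iii)$ gives $\langle I_\lambda'(u_\infty),\psi\rangle=0$ for all such $\psi$. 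Setting $\ell:=\lim\|u_j-u_\infty\|_{H_{\SD}^s}^2$ (along a further subsequence), the identity \eqref{PS:15}-type argument gives $\ell^{2/2_s^*}\widetilde S(\Sigma_{\mathcal D})\le\ell$, so $\ell=0$ or $\ell\ge[\widetilde S(\Sigma_{\mathcal D})]^{N/2s}$, and the energy splitting $I_\lambda(u_j)=I_\lambda(u_\infty)+\tfrac sN\ell+o(1)$ yields $\sup_j|I_\lambda(u_j)|\ge |I_\lambda(u_\infty)+\tfrac sN\ell|$. The key point is that $I_\lambda(u_\infty)\ge 0$: indeed, since $u_\infty$ is a critical point of $I_{\lambda|\mathbb V_{k,m}}$, by \eqref{intFxUj} we have $I_\lambda(u_\infty)=\tfrac sN\|u_\infty\|_{2_s^*}^{2_s^*}\ge 0$. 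Hence if $\ell\ge[\widetilde S(\Sigma_{\mathcal D})]^{N/2s}$ then $I_\lambda(u_j)\to I_\lambda(u_\infty)+\tfrac sN\ell\ge \tfrac sN[\widetilde S(\Sigma_{\mathcal D})]^{N/2s}=c^*$, contradicting $(i^*)$. Therefore $\ell=0$, i.e. $u_j\to u_\infty$ strongly in $H_{\SD}^s(\Omega)$.

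The main obstacle, as indicated above, is the first (boundedness) assertion: with only $I_\lambda$ bounded and the \emph{partial} gradient information $(ii)$–$(iii)$ — rather than full $I_\lambda'(u_j)\to 0$ — one cannot invoke the standard Palais–Smale boundedness trick, and must instead extract boundedness from the interplay between the coercive-in-$\tau_j$ estimate coming from $(iii)$ and the sign/scaling structure of the critical term in $I_\lambda$ itself; getting the exponent comparison $2_s^*-1$ versus $2_s^*/2$ to actually force $\{\|u_j\|_{2_s^*}\}$ bounded requires care, since a priori the $L^2$ term $-\tfrac\lambda2\|u_j\|_2^2$ and the sign of $I_\lambda(u_\infty)$ must be handled without assuming $u_j$ is already a (PS) sequence.
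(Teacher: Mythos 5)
There is a genuine gap in the first (boundedness) part of your argument. Your spectral-gap estimate, obtained by pairing $\Pi_2[\nabla I_\lambda(u_j)]$ with $\tau_j^{\mathbb H}-\tau_j^{\mathbb P}$, correctly yields $\|u_j\|_{H_{\SD}^s}\leq C\,t_j^{2_s^*-1}+o(1)$ with $t_j:=\|u_j\|_{2_s^*}$, and hypothesis $(i)$ together with $\|u_j\|_2^2\le|\Omega|^{2s/N}t_j^2$ yields $c\,t_j^{2_s^*}-C\le\|u_j\|_{H_{\SD}^s}^2\le C\,t_j^{2_s^*}+C$, i.e. $\|u_j\|_{H_{\SD}^s}\sim t_j^{2_s^*/2}$ along any subsequence with $t_j\to\infty$. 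But these two pieces of information are \emph{compatible}: since $2_s^*\ge 2$ one has $2_s^*/2\le 2_s^*-1$, so the lower bound $t_j^{2_s^*/2}\lesssim\|u_j\|_{H_{\SD}^s}$ sits \emph{below} the upper bound $\|u_j\|_{H_{\SD}^s}\lesssim t_j^{2_s^*-1}$ for all large $t_j$, and no contradiction arises; the "dominant balance" does not pin $t_j$. (Your own parenthetical already wavers on which way the exponent inequality goes.) A secondary point: your coercivity constant $\mathfrak{m}_\lambda$ is positive only for $\lambda\in(\Lambda_{k-1},\Lambda_{k+m})$, whereas the lemma is stated for arbitrary $\lambda\in\R$ and the paper's proof never uses the spectral gap.

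The paper closes this differently: instead of testing against $\tau_j^{\mathbb H}-\tau_j^{\mathbb P}$, it pairs $\Pi_2[\nabla I_\lambda(u_j)]$ with $u_j$ itself and combines the result with $2I_\lambda(u_j)$, so that the quadratic terms cancel and one is left (up to $\|\Pi_1[u_j]\|^2$, $\lambda\|\Pi_1[u_j]\|_2^2$ and a term controlled by $\|\Pi_1[u_j]\|_\infty\|u_j\|_{2_s^*-1}^{2_s^*-1}$) with exactly $-\tfrac{2s}{N}\|u_j\|_{2_s^*}^{2_s^*}$. Dividing this single identity first by $\|u_j\|_{H_{\SD}^s}^{2_s^*}$ gives $\|u_j\|_{2_s^*}^{2_s^*}/\|u_j\|_{H_{\SD}^s}^{2_s^*}\to0$, hence $\|u_j\|_2/\|u_j\|_{H_{\SD}^s}\to0$ and then, from $(i)$, $\|u_j\|_{2_s^*}^{2_s^*}/\|u_j\|_{H_{\SD}^s}^{2}\to 2_s^*/2$; dividing the same identity by $\|u_j\|_{H_{\SD}^s}^{2}$ gives $\|u_j\|_{2_s^*}^{2_s^*}/\|u_j\|_{H_{\SD}^s}^{2}\to0$. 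These two limits are incompatible, which is the actual contradiction. Your second (compactness) part is essentially sound and parallels the paper's — the key points, namely $\Pi_1 u_\infty=0$, $I_\lambda(u_\infty)=\tfrac sN\|u_\infty\|_{2_s^*}^{2_s^*}\ge0$, and the dichotomy $\ell=0$ or $\ell\ge[\widetilde S(\Sigma_{\mathcal D})]^{N/2s}$, are all correct (the paper reaches the same conclusion by showing $\Pi_2[u_j]$ is a genuine $(PS)$ sequence below $c^*$ and invoking Proposition \ref{propPS}) — but it rests on the boundedness claim, so the first step must be repaired.
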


\begin{proof}
Arguing by contradiction and without losing generality, let $u_j=\Pi_1[u_j]+\Pi_2[u_j]$ satisfy $(i)-(iii)$ such that $\left\| u_j\right\|_{H_{\SD}^s}\to +\infty $ as $j\to+\infty$ and 
\begin{equation}\label{convergencesUj}
\frac{u_j}{\left\| u_j\right\|_{H_{\SD}^s}}\rightharpoonup u_\infty \text { in } H_{\SD}^s(\Omega), \quad \frac{u_j}{\left\| u_j\right\|_{H_{\SD}^s}}\to u_\infty \text { in } L^2(\Omega), 
\end{equation}
for some $u_\infty\in H_{\SD}^s(\Omega)$. Being $H_{\SD}^s(\Omega)$ a Hilbert space and $I_\lambda$ a $C^1$ functional, we define the gradient $\nabla I_\lambda$ of $I_\lambda$, as usual, by
\begin{equation}\label{dd}
\langle \nabla I_\lambda(u),v \rangle_{H_{\SD}^s}\vcentcolon=\langle I_\lambda'(u),v \rangle_{H^{-s}},
\end{equation}
for any $u,v\in H_{\SD}^s(\Omega)$. Let $r\in[1,2_s^*]$ and $\mathcal{K}: L^{\frac{r}{r-1}}(\Omega)\to H_{\SD}^s(\Omega)$ be the operator defined by $\mathcal{K}=(-\Delta)^{-s}$, that is, $\mathcal{K}(g)=v$, where $v\in H_{\SD}^s(\Omega)$ is a weak solution to
\begin{equation*}
\left\{
	\begin{array}{rl}
		(-\Delta)^{s} v = g  & \text{in } \Omega\\
		               B(v) = 0  & \text{on } \partial\Omega.
	\end{array}
	\right.
\end{equation*}
Let us note that,
\begin{align*}
	\left\langle \mathcal K\left( \lambda u+|u|^{2_s^*-2}u\right),\psi\right\rangle_{H_{\SD}^s} & =\int_\Omega \left(\lambda u+|u|^{2_s^*-2}u\right)\psi dx\\
	& =-\left\langle I'_\lambda(u),\psi\right\rangle_{H^{-s}} + \left\langle u,\psi\right\rangle_{H_{\SD}^s} \\
	&=   -\left\langle \nabla I_\lambda(u),\psi\right\rangle_{H_{\SD}^s} + \left\langle u,\psi\right\rangle_{H_{\SD}^s},
\end{align*}
and, hence
\begin{equation}\label{graduK}
\nabla I_\lambda(u)=u-\mathcal K(\lambda u+|u|^{2_s^*-2}u),
\end{equation}
for all $u\in H_{\SD}^s(\Omega)$. As a consequence, we find
\begin{align*}
	\left\langle \Pi_2\left[\nabla I_\lambda(u_j)\right],u_j\right\rangle_{H_{\SD}^s}\!\!  & = \left\langle \nabla I_\lambda(u_j),u_j\right\rangle_{H_{\SD}^s} - \left\langle \Pi_1\left[\nabla I_\lambda(u_j)\right],u_j\right\rangle_{H_{\SD}^s}\\
	&=\left\| u_j\right\|_{H_{\SD}^s}^2\!\!\!\! - \lambda \left\| u_j\right\|_{2}^2 -\!\!\int_\Omega |u_j|^{2_s^*} dx - \left\langle \Pi_1\left[u_j-\mathcal K(\lambda u_j+|u_j|^{2_s^*-2}u_j)\right]\!,u_j\right\rangle_{H_{\SD}^s}\!\!\!.
\end{align*}
Moreover, since $\left\langle \Pi_1[u],v\right\rangle_{H_{\SD}^s}=\left\langle u,\Pi_1[v]\right\rangle_{H_{\SD}^s}$ for all $u,v\in H_{\SD}^s(\Omega)$, one has
\begin{align*}
\left\langle \Pi_1[u_j-\mathcal K(\lambda u_j+|u_j|^{2_s^*-2}u_j)],u_j\right\rangle_{H_{\SD}^s} &= \left\| \Pi_1u_j\right\|_{H_{\SD}^s}^2 -\lambda \left\langle \Pi_1[u_j],\mathcal K(u_j) \right\rangle_{H_{\SD}^s}\\
&\ \ - \left\langle \Pi_1[u_j],\mathcal K(|u_j|^{2_s^*-2}u_j)\right\rangle_{H_{\SD}^s},
\end{align*}
and, by \eqref{graduK},
\begin{align*}
	\lambda \left\langle \Pi_1[u_j],\mathcal{K}(u_j)\right\rangle_{H_{\SD}^s}&+ \left\langle \Pi_1[u_j],\mathcal{K}(|u_j|^{2_s^*-2}u_j)\right\rangle_{H_{\SD}^s} = \lambda \left\| \Pi_1[u_j]\right\|_{2}^2 + \int_\Omega |u_j|^{2_s^*-2}u_j\Pi_1[u_j] dx.
\end{align*}
Therefore, we deduce
\begin{equation}\label{relationQnabla}
	\begin{split}
		\left\langle \Pi_2\left[\nabla I_\lambda(u_j)\right],u_j\right\rangle_{H_{\SD}^s} &=2I_\lambda(u_j) +\frac{N-2s}{N}\int_\Omega |u_j|^{2_s^*} dx  -\int_\Omega |u_j|^{2_s^*-2}u_j\Big(u_j-\Pi_1[u_j]\Big) dx\\
		&\ \  \ - \left\| \Pi_1[u_j]\right\|_{H_{\SD}^s}^2 +\lambda\left\| \Pi_1[u_j]\right\|_{2}^2.
	\end{split}
\end{equation}
Thus, by $(i)-(iii)$ it follows that
\begin{equation}\label{fractiongoingtozero}
\frac{\displaystyle -\frac{2s}{N}\int_\Omega |u_j|^{2_s^*} dx  +\int_\Omega |u_j|^{2_s^*-2}u_j\,\Pi_1[u_j] dx}{\left\| u_j\right\|_{H_{\SD}^s}^{2_s^*}}\to 0
\end{equation}
as $j\to \infty$.  Next, since the eigenfunctions of \eqref{eigenproblem} are bounded (cf. \cite[Proposition 7]{mov2023}), we obtain
\begin{equation*}
\frac{\displaystyle\int_\Omega\left| |u_j|^{2_s^*-2}u_j\,\Pi_1[u_j]\right| dx}{\left\| u_j\right\|_{H_{\SD}^s}^{2_s^*}}  \leq \frac{\left\| \Pi_1[u_j]\right\|_{\infty}\left\| u_j\right\|_{L^{2^*_s-1}(\Omega)}^{2_s^*-1}}{\left\| u_j\right\|_{H_{\SD}^s}^{2_s^*}} \leq c\frac{\left\| \Pi_1[u_j]\right\|_{\infty}}{\left\| u_j\right\|_{H_{\SD}^s}},
\end{equation*}
for some $c>0$, and hence, since $(ii)$ forces $\left\| \Pi_1[u_j]\right\|_{\infty}\to 0$ (on $\text{span}\{\varphi_k,\ldots,\varphi_{k+m-1}\}$ all norms are equivalent), we get
\begin{equation*}
\frac{\displaystyle \int_\Omega |u_j|^{2_s^*-2}u_j \Pi_1[u_j] dx}{\left\| u_j\right\|_{H_{\SD}^s}^{2_s^*}}\to 0, \quad \text{as }j\to\infty,
\end{equation*}
so that, taking in mind \eqref{fractiongoingtozero}, it follows that
\begin{equation}\label{convzeroratio}
\frac{\displaystyle \int_\Omega |u_j|^{2_s^*}dx}{\left\| u_j\right\|_{H_{\SD}^s}^{2_s^*}} \to 0, \quad \text{as } j\to \infty.
\end{equation}
Since
$$
\left\| u_j\right\|_{2}\leq |\Omega|^\frac{s}{N}\left\|u_j \right\|_{L^{2^*_s}(\Omega)}, \quad \text{for every } j\in\N, 
$$
by \eqref{convzeroratio} we deduce that
$$
\frac{\left\| u_j\right\|_{2}}{\left\| u_j\right\|_{H_{\SD}^s}}\to 0 \quad \text{as } j\to \infty,
$$
which, together with \eqref{convergencesUj}, forces $u_\infty\equiv 0$. As a result, we get
\begin{equation*}
\frac{I_\lambda(u_j)}{\left\|u_j \right\|_{H_{\SD}^s}^2}=\frac12-\frac{\lambda}{2} \frac{\left\|u_j \right\|_{2}^2}{ \left\|u_j \right\|_{H_{\SD}^s}^2}-\frac{1}{2_s^*}\frac{\displaystyle \int_\Omega |u_j|^{2_s^*}dx}{\left\|u_j \right\|_{H_{\SD}^s}^2}\to 0,
\end{equation*}
that is,
\begin{equation}\label{convergencetoNN-2s}
\frac{\left\|u_j\right\|^{2_s^*}_{L^{2^*_s}(\Omega)}}{\left\|u_j \right\|_{H_{\SD}^s}^2} \to \frac{2_s^*}{2} \quad \text{as } j\to \infty.
\end{equation}
By retracing the same steps as before and bearing also in mind that, by \eqref{convergencetoNN-2s}, the sequence $\left\lbrace \frac{\left\|u_j\right\|^{2_s^*}_{2^*_s}}{\left\|u_j \right\|_{H_{\SD}^s}^2}\right\rbrace$ is bounded in $\R$, we obtain
\begin{align*}
\frac{\displaystyle\int_\Omega\left| |u_j|^{2_s^*-2}u_j\,\Pi_1[u_j]\right| dx}{\left\| u_j\right\|_{H_{\SD}^s}^2}  &\leq \frac{\left\| \Pi_1[u_j]\right\|_{\infty}\left\| u_j\right\|_{2^*_s-1}^{2_s^*-1}}{\left\| u_j\right\|_{H_{\SD}^s}^2} \leq c_1\frac{\left\| \Pi_1[u_j]\right\|_{\infty}\left\| u_j\right\|_{2^*_s}^{2_s^*-1}}{\left\| u_j\right\|_{H_{\SD}^s}^2}\\
& \leq c_2\frac{\left\| \Pi_1[u_j]\right\|_{\infty}\left\| u_j\right\|_{H_{\SD}^s}^\frac{2(2_s^*-1)}{2^*_s}}{\left\| u_j\right\|_{H_{\SD}^s}^2} = c_2\frac{\left\| \Pi_1[u_j]\right\|_{\infty}}{\left\| u_j\right\|_{H_{\SD}^s}^\frac{2}{2^*_s}}\to 0,
\end{align*}
as $j\to \infty$. As a result, dividing both sides of \eqref{relationQnabla} by $\left\| u_j\right\|_{H_{\SD}^s}^2$ and taking also into account $(i)-(iii)$, we get
\begin{equation}
\frac{\left\|u_j\right\|^{2_s^*}_{2^*_s}}{\left\| u_j\right\|_{H_{\SD}^s}^2}\to 0\quad\text{as}\ j\to \infty,
\end{equation}
which contradicts \eqref{convergencetoNN-2s}. Hence $\{u_j\}$ is bounded in $H_{\SD}^s(\Omega)$. We now assume that 
\begin{equation}\label{n0}
|I_\lambda(u_j)|\leq c<c^*
\end{equation}
for some $c>0$ and for all $j\in\N$. Note that, because of $(ii)$, one has
\begin{equation}\label{n1}
\|\Pi_1[u_j]\|_{\infty}\to0\quad\text{as}\ j\to\infty.
\end{equation}
On the other hand,
\begin{equation*}
\begin{split}
\nabla I_{\lambda}\left(\Pi_2[u_j]\right)
&=\Pi_2[u_j]-\lambda\mathcal{K}\left(\Pi_2[u_j]\right) -\mathcal{K}\left(|\Pi_2\left[u_j\right]|^{2_s^*-2}\Pi_2[u_j]\right)
\end{split}
\end{equation*}
and
\begin{equation*}
\Pi_2\left[\nabla I_{\lambda}(u_j)\right]=\Pi_2[u_j]-\lambda\Pi_2\left[\mathcal{K}( u_j)\right]-\Pi_2\left[\mathcal{K}(|u_j|^{2_s^*-2}u_j)\right],
\end{equation*}
so that
\begin{equation*}
\begin{split}
\nabla I_{\lambda}(\Pi_2[u_j])-\Pi_2\left[\nabla I_{\lambda}(u_j)\right]=&\lambda\left(\Pi_2\left[\mathcal{K}( u_j)\right]-\mathcal{K}\left(\Pi_2[u_j]\right)\right)\\
&+\Pi_2\left[\mathcal{K}(|u_j|^{2_s^*-2}u_j)\right]-\mathcal{K}\left(|\Pi_2\left[u_j\right]|^{2_s^*-2}\Pi_2[u_j]\right)\\
=&\lambda\Pi_2\left[\mathcal{K}( u_j)-\mathcal{K}\left(\Pi_2[u_j]\right)\right]\\
&+\Pi_2\left[\mathcal{K}(|u_j|^{2_s^*-2}u_j)-\mathcal{K}\left(|\Pi_2\left[u_j\right]|^{2_s^*-2}\Pi_2[u_j]\right)\right]\\
=&\lambda\Pi_2\left[\mathcal{K}\left(u_j-\Pi_2[u_j]\right)\right]\\
&+\Pi_2\left[\mathcal{K}\left(|u_j|^{2_s^*-2}u_j-|\Pi_2\left[u_j\right]|^{2_s^*-2}\Pi_2[u_j]\right)\right]\\
=&\lambda\Pi_2\left[\mathcal{K}\left(\Pi_1[u_j]\right)\right]+\Pi_2\left[\mathcal{K}\left(|u_j|^{2_s^*-2}u_j-|\Pi_2\left[u_j\right]|^{2_s^*-2}\Pi_2[u_j]\right)\right].\\
\end{split}
\end{equation*}
Observe that, 
\begin{equation*}
\Pi_2\left[\mathcal{K}\left(\Pi_1[u_j]\right)\right]=\Pi_2\left[\mathcal{K}\left(\sum_{i=k}^{k+m-1}c_{i}\varphi_{i}\right)\right]=\Pi_2\left[\frac{1}{\Lambda_k}\sum_{i=k}^{k+m-1}c_{i}\varphi_{i}\right]=\frac{1}{\Lambda_k}\Pi_2\left[\Pi_1[u_j]\right]=0,
\end{equation*}
so, we get,
\begin{equation*}
\nabla I_{\lambda}(\Pi_2[u_j])-\Pi_2\left[\nabla I_{\lambda}(u_j)\right]=\Pi_2\left[\mathcal{K}\left(|u_j|^{2_s^*-2}u_j-|\Pi_2\left[u_j\right]|^{2_s^*-2}\Pi_2[u_j]\right)\right].
\end{equation*}
Thus, taking $\psi\in H_{\SD}^s(\Omega)$ and writing $\psi=\Pi_1[\psi]+\Pi_2[\psi]$, we get
\begin{equation*}
\left\langle\nabla I_{\bar\lambda}(\Pi_2[u_j])-\Pi_2\left[\nabla I_{\bar\lambda}(u_j)\right],\psi\right\rangle_{H_{\SD}^s}=\left\langle\mathcal{K}\left(|u_j|^{2_s^*-2}u_j-|\Pi_2\left[u_j\right]|^{2_s^*-2}\Pi_2[u_j]\right),\Pi_2[\psi]\right\rangle_{H_{\SD}^s}.
\end{equation*}
Since
\begin{equation*}
\begin{split}
\left\langle\mathcal{K}\left(|u_j|^{2_s^*-2}u_j-|\Pi_2\left[u_j\right]|^{2_s^*-2}\Pi_2[u_j]\right),\Pi_2[\psi]\right\rangle_{H_{\SD}^s}=&\int_\Omega \left(|u_j|^{2_s^*-2}u_j-|\Pi_2\left[u_j\right]|^{2_s^*-2}\Pi_2[u_j]\right)\Pi_2[\psi] dx\\
=&(2_s^*-1)\int_\Omega|\Pi_2[u_j]|^{2_s^*-2}\Pi_1[u_j]\Pi_2[\psi] dx\\
&+O\left(\|\Pi_1[u_j]\|_{\infty}^2\right),
\end{split}
\end{equation*}
we obtain
\begin{equation*}
\begin{split}
\left|\left\langle\nabla I_{\bar\lambda}(\Pi_2[u_j])-\Pi_2\left[\nabla I_{\bar\lambda}(u_j)\right],\psi\right\rangle_{H_{\SD}^s}\right| \leq& (2_s^*-1)\int_\Omega |\Pi_2[u_j]|^{2_s^*-2}|\Pi_2[\psi]| dx\cdot\|\Pi_1[u_j]\|_{\infty}\\
& + O\left(\|\Pi_1[u_j]\|_{\infty}^2\right).
\end{split}
\end{equation*}
As a consequence of $(ii)$,
\begin{equation*}
\nabla I_{\bar\lambda}(\Pi_2[u_j])-\Pi_2\left[\nabla I_{\bar\lambda}(u_j)\right]\to 0 \quad\text{in } H_{\SD}^s
\end{equation*}
and, by \eqref{dd},
\begin{equation}\label{IlambdaprimePi2}
I_\lambda'(\Pi_2[u_j])\to 0 \quad \text{in } H^{-s}(\Omega), \quad \text{as } j\to\infty.
\end{equation}
Because of $(ii)$ and the boundedness of $\{\Pi_2[u_j]\}$, we deduce that
$$
\left\| u_j\right\|_{2^*_s}^{2^*_s}= \left\| \Pi_2[u_j]\right\|_{2^*_s}^{2^*_s} + o(1), 
$$	
and therefore we obtain
\begin{align*}
I_\lambda(u_j) & = \frac{1}{2}\left\| \Pi_1[u_j]\right\|_{H_{\SD}^s}^2 + \frac{1}{2}\left\| \Pi_2[u_j]\right\|_{H_{\SD}^s}^2 -\frac{\lambda}{2}\left\| \Pi_1[u_j]\right\|_{2}^2 - \frac{\lambda}{2}\left\| \Pi_2[u_j]\right\|_{2}^2 -\frac{1}{2^*_s}\left\| \Pi_2[u_j]\right\|_{2^*_s}^{2^*_s} +o(1) \\
& = I_\lambda(\Pi_2[u_j]) - \varrho_j,  
\end{align*}
with
$$
\varrho_j:=\frac{1}{2}\left\| \Pi_1[u_j]\right\|_{H_{\SD}^s}^2-\frac{\lambda}{2}\left\| \Pi_1[u_j]\right\|_{2}^2 +o(1)=o(1).
$$
Therefore, choosing $\varepsilon\in(0,c^*-c)$, there exists $j_0>0$ such that $|\varrho_j|\leq\varepsilon$ and 
\begin{equation}\label{IlambdaPi2}
|I_\lambda(\Pi_2[u_j])|=|I_\lambda(u_j)-\varrho_j|\leq c+\varepsilon<c^*,
\end{equation}
for any $j\geq j_0$. Because of \eqref{IlambdaprimePi2}, \eqref{IlambdaPi2} and Proposition \ref{propPS} it follows that, up to a subsequence, $\Pi_2[u_j]\to u_\infty\in H_{\SD}^s(\Omega)$. 
Then, by $(ii)$, still  up to a subsequence, $u_j\to u_\infty$ in $H_{\SD}^s(\Omega)$ and the proof is concluded.
\end{proof}	


We are now in a position to show that $I_\lambda$ satisfies the $\nabla$-condition.

\begin{proposition}\label{nablacond}
Let $k\in\N$, $k\geq 2$, and let $\Lambda_k$ be an eigenvalue of multiplicity $m\in\N$. Then, for any $\lambda\in(\Lambda_{k-1},\Lambda_{k+m})$ 
and for any $\varepsilon',\varepsilon''\in\R$ satisfying $0<\varepsilon'<\varepsilon''<\varepsilon_\lambda$, with $\varepsilon_{\lambda}$ given in \eqref{defepsilonlambda}, the functional $I_\lambda$ satisfies  $(\nabla)(I_\lambda,\mathbb V_{k,m},\varepsilon',\varepsilon'')$.
\end{proposition}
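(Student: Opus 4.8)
The plan is to argue by contradiction, following the strategy that makes Lemma~\ref{Ujbounded} the workhorse. Suppose $(\nabla)(I_\lambda,\mathbb V_{k,m},\varepsilon',\varepsilon'')$ fails. Then for every $n\in\N$ (taking $\gamma=1/n$) there is a point $u_n\in\mathbb V_{k,m}$ with $\varepsilon'\le I_\lambda(u_n)\le\varepsilon''$, $d(u_n,\mathbb V_{k,m})\le 1/n$, and $\|\Pi_2[\nabla I_\lambda(u_n)]\|_{H_{\SD}^s}\le 1/n$ (recall $\Pi_2$ is the projection onto $\mathbb V_{k,m}$, so the $\nabla$-condition subspace $M$ here is $X_1\oplus X_3=\mathbb V_{k,m}$ and $\Pi_M=\Pi_2$). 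The condition $d(u_n,\mathbb V_{k,m})\le 1/n$ together with $u_n\in\mathbb V_{k,m}$ would literally give $d(u_n,\mathbb V_{k,m})=0$; the intended reading (as in \cite{mov2023}) is that the infimum in Definition~\ref{def_nabla_condition} is being contradicted, so one extracts $u_n\in\mathbb V_{k,m}$ with $\varepsilon'\le I_\lambda(u_n)\le\varepsilon''$ and $\Pi_2[\nabla I_\lambda(u_n)]\to 0$ in $H_{\SD}^s(\Omega)$.

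Next I would verify that the sequence $\{u_n\}$ fulfils hypotheses $(i^*)$, $(ii)$, $(iii)$ of Lemma~\ref{Ujbounded}. Hypothesis $(i^*)$ holds because $\sup_n|I_\lambda(u_n)|\le\varepsilon''<\varepsilon_\lambda\le\varepsilon_{H_k^*}<c^*$ by \eqref{llv1}; this is exactly the point of having placed $\varepsilon_\lambda$ below $c^*$. Hypothesis $(ii)$, namely $\Pi_1[u_n]\to 0$, is immediate and in fact $\Pi_1[u_n]=0$ for all $n$, since $u_n\in\mathbb V_{k,m}=\ker\Pi_1$ (here $\Pi_1=\Pi_{\mathbb E_{k,m}}$). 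Hypothesis $(iii)$ is precisely $\Pi_2[\nabla I_\lambda(u_n)]\to 0$. Lemma~\ref{Ujbounded} then yields, up to a subsequence, $u_n\to u_\infty$ strongly in $H_{\SD}^s(\Omega)$, and since $\mathbb V_{k,m}$ is closed we get $u_\infty\in\mathbb V_{k,m}$.

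Passing to the limit, continuity of $I_\lambda$ gives $\varepsilon'\le I_\lambda(u_\infty)\le\varepsilon''$, so in particular $I_\lambda(u_\infty)\in(-\varepsilon_\lambda,\varepsilon_\lambda)\setminus\{0\}$ (indeed $I_\lambda(u_\infty)\ge\varepsilon'>0$). On the other hand, $\nabla I_\lambda$ is continuous, $\Pi_2$ is continuous, and $\Pi_1[u_n]=0$ forces $\nabla I_\lambda(u_n)=\Pi_1[\nabla I_\lambda(u_n)]+\Pi_2[\nabla I_\lambda(u_n)]$ where we must also control $\Pi_1[\nabla I_\lambda(u_n)]$. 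This is where I would use the structure $\nabla I_\lambda(u)=u-\mathcal K(\lambda u+|u|^{2_s^*-2}u)$ from \eqref{graduK}: since $u_n\in\mathbb V_{k,m}$, $\Pi_1[\nabla I_\lambda(u_n)]=-\Pi_1[\mathcal K(\lambda u_n+|u_n|^{2_s^*-2}u_n)]$, and the argument already carried out in Lemma~\ref{Ujbounded} (the computation showing $\Pi_2[\mathcal K(\Pi_1[u_n])]=\tfrac1{\Lambda_k}\Pi_2[\Pi_1[u_n]]$ and the companion estimates) shows that this term, evaluated along the strongly convergent sequence, together with $\Pi_2[\nabla I_\lambda(u_n)]\to 0$, forces $\nabla I_\lambda(u_n)$ restricted to $\mathbb V_{k,m}$ to vanish in the limit, i.e.\ $u_\infty$ is a critical point of ${I_\lambda}_{|\mathbb V_{k,m}}$. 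Actually the cleanest route is: from $\Pi_2[\nabla I_\lambda(u_n)]\to 0$ and $u_n\to u_\infty$ in $\mathbb V_{k,m}$, continuity gives $\Pi_2[\nabla I_\lambda(u_\infty)]=0$; but for $v\in\mathbb V_{k,m}$ one has $\langle \nabla I_\lambda(u_\infty),v\rangle=\langle \Pi_2[\nabla I_\lambda(u_\infty)],v\rangle$, so $u_\infty$ is a critical point of the restricted functional. Now invoke Lemma~\ref{uniquecritpoint}: the only critical point of ${I_\lambda}_{|\mathbb V_{k,m}}$ with energy in $(-\varepsilon_\lambda,\varepsilon_\lambda)$ is $u_\infty\equiv 0$, whence $I_\lambda(u_\infty)=0$, contradicting $I_\lambda(u_\infty)\ge\varepsilon'>0$.

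The main obstacle, and the only genuinely delicate point, is the passage from $\Pi_2[\nabla I_\lambda(u_n)]\to 0$ to "$u_\infty$ is a critical point of the restricted functional": one must be careful that the restricted gradient of $I_\lambda$ on $\mathbb V_{k,m}$ really coincides with $\Pi_2\circ\nabla I_\lambda$ evaluated on $\mathbb V_{k,m}$, which is true because $\mathbb V_{k,m}$ is a closed subspace and $\Pi_2$ is the orthogonal projection onto it; with that identification the convergence is handled purely by continuity of $\nabla I_\lambda$ (the strong convergence of $u_n$ provided by Lemma~\ref{Ujbounded} being essential here, since $\nabla I_\lambda$ is not weakly continuous in the critical regime). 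Everything else — checking $(i^*)$ via \eqref{llv1}, and $(ii)$ trivially — is immediate from the definitions.
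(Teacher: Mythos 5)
Your proof is correct and follows essentially the same route as the paper's: negate the condition, verify hypotheses $(i^*)$--$(iii)$ of Lemma \ref{Ujbounded} (with $(i^*)$ guaranteed by $\varepsilon''<\varepsilon_\lambda<c^*$ via \eqref{llv1}), pass to the strong limit, identify $u_\infty$ as a critical point of ${I_\lambda}|_{\mathbb V_{k,m}}$ through $\Pi_2[\nabla I_\lambda(u_\infty)]=0$, and contradict Lemma \ref{uniquecritpoint} using $I_\lambda(u_\infty)\geq\varepsilon'>0$. The only divergence is that the paper extracts the contradicting sequence with $d(u_j,\mathbb V_{k,m})\to 0$ rather than $u_j\in\mathbb V_{k,m}$ (the Marino--Saccon reading of the definition, which is what Theorem \ref{nablathm} actually requires), but your argument extends verbatim to that case since hypothesis $(ii)$ of Lemma \ref{Ujbounded} only needs $\Pi_1[u_j]\to 0$.
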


\begin{proof}

We argue by contradiction. Let us assume the existence of $\bar\lambda\in(\Lambda_{k-1},\Lambda_{k+m})$ and $\varepsilon'<\varepsilon''$ in $(0,\varepsilon_{\bar\lambda})$ for which the $\nabla$-condition $(\nabla)(I_{\bar\lambda},\mathbb V_{k,m},\varepsilon',\varepsilon'')$ does not hold.
By Lemma \ref{uniquecritpoint}, $u=0$ is the only critical point of $I_{\bar\lambda}$ on $\mathbb V_{k,m}$ with $ I_{\bar\lambda}(u)\in (-\varepsilon_{\bar\lambda},\varepsilon_{\bar\lambda})$. Let $\{u_j\}\subset H_{\SD}^s(\Omega)$ be such that
\[ I_{\bar\lambda}(u_j)\in[\varepsilon',\varepsilon'']\quad \text{ for all } j\in\N\]
so that, by \eqref{llv1}, we have $I_{\bar\lambda}(u_j)<c^*$, with $c^*$ given in \eqref{level}, and
\begin{equation}\label{Pnabla}
\text{dist}(u_j,\mathbb V_{k,m})\to 0\qquad\text{and}\qquad\Pi_2\left[\nabla I_{\bar\lambda}(u_j)\right]\to 0,
\end{equation}
as $j\to\infty$. Then, by Lemma \ref{Ujbounded}, the sequence $\{u_j\}$ strongly converges in $H_{\SD}^s(\Omega)$, namely,
\begin{equation*}
u_j\to u_\infty \text { in } H_{\SD}^s(\Omega).
\end{equation*}
In addition, $u_\infty$ turns out to be a critical point of $I_{\bar\lambda}|_{\mathbb{V}_{k,m}}$. Indeed, by \eqref{Pnabla}, we get
\begin{align*}
	\left\langle I'_{\bar\lambda}(u_j),\psi\right\rangle_{H^{-s}} & = \left\langle u_j,\psi\right\rangle_{H_{\SD}^s} -\bar \lambda\int_\Omega u_j(x)\psi(x) dx -\int_\Omega |u_j|^{2_s^*-2}u_j\psi(x)dx \to 0
\end{align*}
as $j\to\infty$, for all $\psi\in \mathbb{V}_{k,m}$. Hence, using again Lemma \ref{Ujbounded}, we conclude
\[
\left\langle I'_{\bar\lambda}(u_\infty),\psi\right\rangle_{H^{-s}} = \left\langle u_\infty,\psi\right\rangle_{H_{\SD}^s} -\bar\lambda\int_\Omega u_\infty(x)\psi(x) dx -\int_\Omega |u_\infty|^{2_s^*-2}u_\infty\psi(x)dx.
\]
So, it must be $u_\infty=0$. Since $I_{\bar\lambda}(u_j)\geq\varepsilon'>0$, we get $I_{\bar\lambda}(u_\infty)>0$, a contradiction.
\end{proof}

\section{Proof of Theorem \ref{mainresult}}\label{proofofmainresult}

\begin{lemma}\label{lemaa}
Let $k\in\N$, $k\geq 2$, and let $\Lambda_k$ be an eigenvalue of multiplicity $m\in\N$. Then, for any $0<\delta<\Lambda_k$ and any $\lambda\in(\Lambda_{k}-\delta, \Lambda_{k})$, one has
\begin{equation*}
	\sup_{u\in \mathbb{H}_{k+m-1}}I_\lambda(u)<\frac{s}{N}|\Omega|\delta^\frac{N}{2s}:=\eta_\delta.
\end{equation*}
In particular, 
\begin{equation*}
	\lim_{\lambda\to\Lambda_k}\sup_{u\in \mathbb{H}_{k+m-1}}I_\lambda(u)=0.
\end{equation*}
\end{lemma}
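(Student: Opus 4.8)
The plan is to turn the estimate into a one–variable optimization after exploiting the variational characterization of the eigenvalues. First I would observe that, since $\Lambda_k$ has multiplicity $m$, one has $\Lambda_{k+m-1}=\Lambda_k$, so Lemma~\ref{varcharacteigen} gives $\|u\|_{H^s_{\SD}}^2\le \Lambda_k\|u\|_2^2$ for every $u\in\mathbb H_{k+m-1}$. Substituting this into the expression \eqref{functional_down} of $I_\lambda$, one reduces to controlling, for $u\in\mathbb H_{k+m-1}$, the quantity
\[
\frac{\Lambda_k-\lambda}{2}\|u\|_2^2-\frac{1}{2_s^*}\|u\|_{2_s^*}^{2_s^*},
\]
whose leading coefficient $\Lambda_k-\lambda$ is strictly positive because $\lambda<\Lambda_k$.

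Next I would apply Hölder's inequality in the form $\|u\|_2^2\le|\Omega|^{2s/N}\|u\|_{2_s^*}^2$ (the exponent coming from $\tfrac12=\tfrac{1}{2_s^*}+\tfrac{s}{N}$), which bounds $I_\lambda(u)$ above by $f(t)$, where $t:=\|u\|_{2_s^*}^2\ge 0$ and
\[
f(t):=\frac{\Lambda_k-\lambda}{2}|\Omega|^{2s/N}\,t-\frac{1}{2_s^*}\,t^{2_s^*/2}.
\]
Since $2_s^*/2=N/(N-2s)>1$, the function $f$ is concave on $[0,+\infty)$ and attains its maximum at a unique interior critical point; computing it explicitly and using the elementary identities $\tfrac12-\tfrac{1}{2_s^*}=\tfrac sN$ and $\tfrac{2s}{N}\cdot\tfrac{N}{2s}=1$, one gets $\max_{t\ge0}f(t)=\tfrac sN|\Omega|(\Lambda_k-\lambda)^{N/2s}$. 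Hence $\sup_{u\in\mathbb H_{k+m-1}}I_\lambda(u)\le \tfrac sN|\Omega|(\Lambda_k-\lambda)^{N/2s}$.

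Finally, since $\lambda\in(\Lambda_k-\delta,\Lambda_k)$ forces $0<\Lambda_k-\lambda<\delta$ and $t\mapsto t^{N/2s}$ is increasing, this last bound is strictly smaller than $\tfrac sN|\Omega|\delta^{N/2s}=\eta_\delta$, which is the first assertion. The limit statement then follows by squeezing: $\sup_{u\in\mathbb H_{k+m-1}}I_\lambda(u)\ge I_\lambda(0)=0$, while the upper bound $\tfrac sN|\Omega|(\Lambda_k-\lambda)^{N/2s}$ tends to $0$ as $\lambda\to\Lambda_k^-$. I do not expect any genuine obstacle in this argument; the only point requiring a little care is the bookkeeping of exponents in the one–variable maximization, which is purely computational, together with invoking concavity correctly so that the single critical point is indeed the global maximum of $f$ on $[0,+\infty)$.
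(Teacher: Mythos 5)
Your argument is correct and is essentially the paper's proof: the same chain of estimates (the variational characterization $\|u\|_{H^s_{\SD}}^2\le\Lambda_k\|u\|_2^2$ on $\mathbb H_{k+m-1}$, Hölder's inequality, and the one-variable concave maximization yielding the value $\tfrac sN|\Omega|(\Lambda_k-\lambda)^{N/2s}$), merely presented directly rather than by contradiction and with the optimization variable $\|u\|_{2_s^*}^2$ in place of $\|u_0\|_2^2/|\Omega|$. A minor bonus of your direct formulation is that it does not require the supremum to be attained, which the paper's contradiction argument implicitly uses.
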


\begin{proof}

By contradiction, let $\bar\delta>0$ and $\bar\lambda\in(\Lambda_k-\bar\delta,\Lambda_k)$ such that
\begin{equation}\label{eqd}
\sup\limits_{u\in \mathbb{H}_{k+m-1}}I_{\bar\lambda}(u)\geq\eta_{\bar\delta}.
\end{equation}
Let also $u_0\in \mathbb{H}_{k+m-1}$ be such that
$$
I_{\bar\lambda}(u_0)= \sup\limits_{u\in \mathbb{H}_{k+m-1}}I_{\bar\lambda}(u).
$$
By Lemma \ref{varcharacteigen} and \eqref{eqd}, we derive
\begin{equation*}
\begin{split}
\eta_{\bar\delta}\leq I_{\bar\lambda}(u_0)&=\frac{1}{2}\left\| u_0\right\|_{H_{\SD}^s}^2-\frac{\bar\lambda}{2}\|u_0\|_2^2-\frac{1}{2_s^*}\|u_0\|_{2_s^*}^{2_s^*}\\
&\leq\frac12(\Lambda_{k}-\bar\lambda)\|u_0\|_2^2-\frac{1}{2_s^*}\|u_0\|_{2_s^*}^{2_s^*}\\
&\leq\frac12(\Lambda_{k}-\bar\lambda)\|u_0\|_2^2-\frac{1}{2_s^*} \left( |\Omega|^{-\frac{2s}{N}} \|u_0\|_2^2\right)^{\frac{2_s^*}{2}}\\
&=\frac12(\Lambda_{k}-\bar\lambda)\|u_0\|_2^2-\frac{1}{2_s^*}|\Omega|\left(\frac{\|u_0\|_2^2}{|\Omega|}\right)^{\frac{2_s^*}{2}} \\
&=\frac12(\Lambda_{k}-\bar\lambda)|\Omega|\left(\frac{\|u_0\|_2^2}{|\Omega|}\right)-\frac{1}{2_s^*}|\Omega|\left(\frac{\|u_0\|_2^2}{|\Omega|}\right)^{\frac{2_s^*}{2}}.
\end{split}
\end{equation*}
Now observe that, for $\tau>0$, the function
\begin{equation*}
h(\tau):=\frac12(\Lambda_{k}-\bar\lambda)\tau-\frac{1}{2_s^*}\tau^{\frac{2_s^*}{2}},
\end{equation*}
attains its maximum at $\tau_0:=(\Lambda_k-\bar\lambda)^{\frac{2}{2_s^*-2}}$ and one has
\begin{equation*}
h(\tau_0)=\frac{s}{N}(\Lambda_k-\bar\lambda)^{\frac{N}{2s}}.
\end{equation*}
Therefore, we obtain
\begin{equation*}
\eta_{\bar\delta} \leq \frac{s}{N}|\Omega|(\Lambda_k-\bar\lambda)^{\frac{N}{2s}}<\frac{s}{N}|\Omega|{\bar\delta}^{\frac{N}{2s}}=\eta_{\bar\delta},
\end{equation*}
a contradiction.
\end{proof}

\begin{proof}[Proof of Theorem \ref{mainresult}]
Let $\Lambda_k$, $k\geq 2$, be an eigenvalue of multiplicity $m\in\N$. By Proposition \ref{nablacond}, for any $\lambda\in(\Lambda_{k-1},\Lambda_{k+m})$, it follows that $(\nabla)(I_\lambda,\mathbb{H}_{k-1}\oplus \mathbb{P}_{k+m-1},\varepsilon',\varepsilon'')$ holds for all $\varepsilon',\varepsilon''\in\R$ satisfying  $0<\varepsilon'<\varepsilon''<\varepsilon_\lambda$.
	
Let us show now that there exists $\bar\delta_k\in(0,\Lambda_k)$ such that, for any $\lambda\in\left(\Lambda_k-\bar\delta_k,\Lambda_k\right)$, one has
\begin{equation}\label{etavarepsilon}
\eta_{\bar\delta_k} < \varepsilon_\lambda.	
\end{equation}

We distinguish two cases. If $\lambda<H_k^*$, then
\eqref{etavarepsilon} is equivalent to the system
\begin{equation}\label{system1}
\left\lbrace 
\begin{array}{l}
\Lambda_k -\bar\delta_k <\lambda<\Lambda_k,\smallskip\\
\displaystyle\bar\delta_k<\frac{\lambda-\Lambda_{k-1}}{\Lambda_{k-1}}|\Omega|^{-\frac{2s}{N}}\widetilde{S}(\Sigma_{\mathcal{D}}),\smallskip\\
0<\bar\delta_k<\Lambda_k.	
\end{array}
\right. 
\end{equation}
By direct computations it is easily seen that the admissible value of $\bar\delta_k$ which maximizes the range of $\lambda$ is
$$
\delta_k^{(1)}:=\frac{\Lambda_k-\Lambda_{k-1}}{\Lambda_{k-1}+\widetilde{S}(\Sigma_{\mathcal{D}})|\Omega|^{-\frac{2s}{N}}}|\Omega|^{-\frac{2s}{N}}\widetilde{S}(\Sigma_{\mathcal{D}}).
$$
Arguing similarly, when $\lambda>H_k^*$, \eqref{etavarepsilon} amounts to
\begin{equation}\label{system2}
	\left\lbrace 
	\begin{array}{l}
		\Lambda_k -\bar\delta_k <\lambda<\Lambda_k,\smallskip\\
		\displaystyle\bar\delta_k<\frac{\Lambda_{k+m}-\lambda}{\lambda_{k+m}}|\Omega|^{-\frac{2s}{N}}\widetilde{S}(\Sigma_{\mathcal{D}}),\smallskip\\
		0<\bar\delta_k<\Lambda_k,	
	\end{array}
	\right. 
\end{equation}
which has
$$
\delta_k^{(2)}:=\frac{\Lambda_{k+m}-\Lambda_{k}}{\Lambda_{k+m}}|\Omega|^{-\frac{2s}{N}}\widetilde{S}(\Sigma_{\mathcal{D}})
$$
as an extremal value for $\bar\delta_k$.

Therefore, choosing $\delta_k$ as in \eqref{defdeltak} and taking also account of Lemma \ref{lemaa}, we obtain
\begin{equation}
\sup_{u\in \mathbb{H}_{k+m-1}}I_\lambda(u) < \eta_{\delta_k} <\varepsilon_\lambda,
\end{equation} 
for all $\lambda\in(\Lambda_k-\delta_k,\Lambda_k)$. Letting $\varepsilon''\in \left(\sup_{ \mathbb{H}_{k+m-1}}I_\lambda,\eta_{\delta_k}\right)$ and taking also account of Propositions \ref{prop_supinf} and \ref{propPS}, by Theorem \ref{nablathm}, $I_\lambda$ has two critical points $u_1$ and $u_2$ satisfying $I_\lambda(u_i)\in [\varepsilon',\varepsilon'']$. In particular, being $\varepsilon''$ arbitrary we obtain that
\begin{equation}\label{energiesmallersup}
	0<I_\lambda(u_i) \leq \sup_{u\in \mathbb{H}_{k+m-1}}I_\lambda(u),
\end{equation}
for every $\lambda\in(\Lambda_k-\delta_k,\Lambda_k)$.
Then, Theorem \ref{mainresult} is proved.
\end{proof}

\section*{Acknowledgement}
\noindent G. Molica Bisci is funded by the European Union - NextGenerationEU within the framework of PNRR Mission 4 - Component 2 - Investment 1.1 under the Italian Ministry of University and Research (MUR) program PRIN 2022 - grant number 2022BCFHN2 - Advanced
theoretical aspects in PDEs and their applications - CUP: H53D23001960006.\\
A. Ortega is partially funded by Vicerrectorado de Investigación, Transferencia del Conocimiento y Divulgación Científica of Universidad Nacional de Educación a Distancia under research project Talento Joven UNED 2025, Ref: 2025/00151/001.\\
L. Vilasi is a member of the Gruppo Nazionale per l'Analisi Matematica, la Probabilità e le loro Applicazioni (GNAMPA) of the Istituto Nazionale di Alta Matematica (INdAM). This work is partially funded by the ``INdAM - GNAMPA Project CUP E5324001950001".


\end{document}